\numberwithin{equation}{section} 
\newenvironment{msc}{\vspace*{-0.75cm}\small\quotation\noindent{\normalfont\sectfont\nobreak MSC (2020)\quad}}{\endquotation\vskip0.7cm}
\newcommand{\mscLink}[1]{\href{http://www.ams.org/mathscinet/msc/msc2010.html?t=#1}{#1}}
\newenvironment{keywords}{\vspace*{-0.75cm}\small\quotation\noindent{\normalfont\sectfont\nobreak Keywords\quad}}{\endquotation\vskip0.7cm}
\newcommand\be{\begin{equation}}
\newcommand\ee{\end{equation}}
\renewcommand{\subset}{\subseteq}
\newcommand{\dx}{\,\text{\rmfamily{}\upshape{}d}x}
\newcommand{\dt}{\,\text{\rmfamily{}\upshape{}d}t}
\def\N{\mathbb  N}
\def\R{\mathbb  R}
\def\L{\mathcal  L}
\def\argmin{\operatorname*{arg\, min}}
\def\dom{\operatorname{dom}}
\def\dtol{\delta_{\mathrm{tol}}}
\newcommand{\PP}[1]{{\normalfont({\bfseries P\!}${}_{#1}$)}}
\def\PB{\PP{B}} 
\newcolumntype{L}{>{$}l<{$\quad}}
\newcolumntype{R}{>{$}r<{$\quad}}
\newcolumntype{C}{>{$}c<{$}}
\begin{document}

\title{A topological derivative-based algorithm to solve optimal control problems with \texorpdfstring{$\scriptstyle L^0(\Omega)$}{L⁰(Ω)} control cost}
\shorttitle{A topological derivative-based algorithm for \texorpdfstring{$\scriptstyle L^0$}{L⁰} problems}

\author{Daniel Wachsmuth%
\thanks{Institut f\"ur Mathematik,
Universit\"at W\"urzburg,
97074 W\"urzburg, Germany,
		\email{daniel.wachsmuth@uni-wuerzburg.de},
		\orcid{0000-0001-7828-5614}}}

\shortauthor{Daniel Wachsmuth}

\date{2024-02-20}

\acknowledgements{
This research was supported by the German Research Foundation (DFG) under grant number WA 3626/3-2
within the priority program ``Non-smooth and Complementarity-based Distributed Parameter
Systems: Simulation and Hierarchical Optimization'' (SPP 1962).%
}

\maketitle

\begin{abstract}
In this paper, we consider optimization problems with $L^0$-cost of the controls.
Here, we take the support of the control as independent optimization variable.
Topological derivatives of the corresponding value function with respect to variations of the support are derived.
These topological derivatives are used in a
novel gradient descent algorithm with Armijo line-search.
Under suitable assumptions, the algorithm produces a minimizing sequence.
\end{abstract}

\begin{keywords}
Topological derivative, control support optimization, sparse optimal control, $L^0$ optimization.
\end{keywords}

\begin{msc}
\mscLink{49M05},   	
\mscLink{49K40},   	
\mscLink{65K10}   	
\end{msc}

\section{Introduction}

In this paper we are interested in the following optimal control problem: Minimize
\begin{equation}\label{eqintro_ex_001}
 \min  \frac12 \|y-y_d\|_{L^2(\Omega)}^2 + \frac\alpha2 \|u\|_{L^2(\Omega)}^2 + \beta \|u\|_0
\end{equation}
over all $(y,u)$ satisfying
\begin{equation}\label{eqintro_ex_002}\begin{aligned}
 -\Delta  y &= u \quad \text{ on }\Omega\\
 y &= 0 \quad \text{ on }\partial\Omega
\end{aligned}\end{equation}
and
\begin{equation}\label{eqintro_ex_003}
  u_a \le u \le u_b.
\end{equation}
Here, $\|u\|_0$ is the measure of the support of $u$. This optimal control problem can be interpreted in the context
of optimal actuator placement: Find a (possibly small) measurable set $A\subset\Omega$ such that controls supported on $A$
can still minimize a certain objective functional.
We remark that the elliptic equation in \eqref{eqintro_ex_002} can be replaced by other types of partial differential equations,
for example parabolic or hyperbolic equations.
A control support optimization subject to the wave equation with terminal contraints is performed in \cite{Munch2008,Munch2009}.

In this work, we will take the support of the control $u$ as own optimization variable $A \subset \Omega$.
In addition, we will allow for a more general control problem as above. The abstract problem we are interested in is:
Minimize with respect to $u\in L^2(\Omega)$ and measurable $A\subset \Omega$ the functional
\begin{equation}\label{eq002}
 J(u,A):=\frac12\|S(\chi_Au) - y_d\|_Y^2 + \int_\Omega g(u(x)) + \chi_A(x)\beta(x) \dx,
\end{equation}
where $S:L^2(\Omega) \to Y$ is a solution operator of a linear partial differential equation, $Y$ is a Hilbert space, $y_d\in Y$ is given,
$g:\R \to \bar \R$ is a strongly convex function, and $\beta\in L^1(\Omega)$.
Then for fixed measurable $A\subset \Omega$ the map $u \mapsto  J(u,A)$ is convex, which is crucial for our analysis.
As already mentioned, we can allow for time-dependent differential equations as well, see \cref{ex_parabolic,ex_hyperbolic} below.

The main contribution of this paper is to derive an algorithm to solve \eqref{eq002}.
In contrast to existing approaches \cite{ItoKunisch2014,KaliseKunischSturm2018, Wachsmuth2019},
the algorithm proposed in this paper produces a minimizing sequence for \eqref{eq002}.
Such an algorithm is not available in the literature.

Given $A\subset \Omega$ measurable, the functional $u\mapsto J(u,A)$ admits minimizers, and we can study the
value function
\begin{equation}\label{eq003}
J(A):= \min_{u\in L^2(\Omega)} J(u,A),
\end{equation}
where the minimization is carried out over measurable sets $A\subset\Omega$.
We will investigate topological derivatives of the value function.
In additions, we are interested in the shape optimization problem
\begin{equation}\label{eq004}
\min_{A\subset \Omega} J(A).
\end{equation}
The topological derivative $DJ(A)$ is the main result of \cref{thm_top_der}. It can be extended to non-strongly convex $g$, see \cref{thm_top_der_nsc}.
These results generalize available results in the literature \cite{Amstutz2011,KaliseKunischSturm2018,Munch2008,Munch2009}, as we allow for non-smooth $g$ and incorporate control constraints.
In comparison to \cite{Amstutz2011,KaliseKunischSturm2018}, we will use less smoothness assumptions, in particular no continuity of controls and adjoints is required.
An optimality condition for \eqref{eq004} can be given in terms of the topological derivatives as follows: If $B$ is a solution of \eqref{eq004}
then $DJ(B)\ge0$, see \cref{thm_opt_con}.

The concept of topological derivatives goes back to the seminal work \cite{SokoOwskiZochowski1999}.
It was applied to an optimal control problem in \cite{SokoOwskiZochowski1999ctrl},
where the topological derivative with respect to changes of the domain but not of the control domain was computed.
In these works, asymptotic analysis with respect to radius of small inclusions/exclusions was performed.
For an introduction and overview of available results regarding topological derivatives, we refer to the monographs \cite{NovotnySokoOwski2013,NovotnySokoOwskiZochowski2019_book} and the recent introductory exposition \cite{Amstutz2021}.

While topological perturbations of source terms is a well understood topic,
see, e.g., \cite[Theorem 2.1]{Amstutz2021},
this is not true for
topological perturbations of the control domain in control problems like \eqref{eq004}.
The topological derivative of a value function of an optimal control problem subject to the wave equation with terminal constraints was
given in \cite{Munch2008,Munch2009} for $A=\Omega$ without proof.
Topological derivatives of value functions of optimal control problems were derived in \cite{KaliseKunischSturm2018}
for problems without control constraints, however the result and its proof are wrong.
In particular, their topological derivative evaluated at $A$ is zero on the complement of $A$
for $L^2$-controls in space,
\cite[Corollary 4.1]{KaliseKunischSturm2018},
which was corrected in the mean-time with a new version on arxiv.
In addition, the underlying abstract theory only allows to compute the topological derivative
at one fixed point, which necessitates continuity assumptions on that point $x\in \Omega$.
In our proof, we get the topological derivative at almost all $x\in \Omega$ at once using the Lebesgue differentiation theorem.
Moreover, we can allow for control constraints and non-smooth functions $g$.


In addition to the development of the topological derivative, we also investigate an algorithm to solve the
problem at hand. In the algorithm, variations of a given set $A_k \subset \Omega$ are performed at points, where
the topological derivative $DJ(A_k)$ has the wrong sign.
Let $\rho_k:=DJ(A_k)^-$ be the residual in the optimality condition for \eqref{eq004}, and set $R_k:=\{x: \ \rho_k\ne0\}$.
Then the new iterate $A_{k+1}$ is defined as $A_{k+1} :=A_k \triangle D_{k,t}$, where $\triangle$ denotes the symmetric difference of sets. The set
$D_{k,t} \subset R_k$ is chosen such that
\[
 \|\rho_k\|_{L^1(D_{k,t})} \ge t  \|\rho_k\|_{L^1(\Omega)} ,
 \qquad  |D_{k,t}| \le  t |R_k|,
\]
where $t\in (0,1]$ is determined by a linesearch strategy to guarantee a sufficient decrease of $J(A_{k+1})$.
These choices enable a satisfying convergence theory: the method produces a minimizing sequence, see \cref{thm_convergence}.
A related algorithm can be found in \cite{CeaGioanMichel1973,CeaGioanMichel1974}. We comment on the differences to our method in  \cref{rem_algorithms}.

Our choice of $D_{k,t}$ is different to the classical topological optimization algorithm,
where one sets
\[
 D_{k,t} := \{ x : \ |\rho_k(x)| \ge t \|\rho_k\|_{L^\infty(\Omega)}\}.
\]
The parameter $t$ is determined to enforce a volume constraint, see, e.g., \cite{CeaGarreauGuillaumeMasmoudi2000,GarreauGuillaumeMasmoudi2001,GuillaumeSidIdris2002},
or to ensure decrease of the functional, e.g., \cite{CarpioRapun2012,HriziHassine2018,HassineJanMasmoudi2007,HintermullerLaurain2010}.
An alternative algorithmic idea is to introduce topological changes near local maxima of $|\rho_k|$, see e.g.,
\cite{Amstutz2005,BenAbdaHassineJaouaMasmoudi2009}.
No convergence results are given in these works.

Another method was introduced in \cite{AmstutzA2006}: there a (simplified) level-set method was suggested, where the evolution of the level-set function
is done using the topological derivative. This method is applied to a wide variety of problems, see e.g., \cite{Amstutz2011,AmstutzA2006,FulmanskiLaurainScheidSokoOwski2007,NovotnySokoOwskiZochowski2019}.
An convergence proof can be found in \cite{Amstutz2011}. However, for the proof the functional has to be replaced by its $H^s$-lower semicontinuous envelope.

Topological optimization problems are related to binary control or 0-1-optimi\-zation problems.
This connection is exploited in \cite{Amstutz2010,AmstutzLaurain2013}.
An trust-region method to solve such problems is analyzed in the recent contributions \cite{HahnLeyfferSager2023,MannsHahnKirchesLeyfferSager2023},
where it is proven that a certain criticality measure converges to zero during the iteration,
which is comparable to our result. 

Let us emphasize that the convergence analysis in this paper is enabled by the particular structure of the
problem. It is expected that the analysis carries over to related problems (e.g., source identification problems).
However, it is unclear how to transfer these results to harder problems,
where the optimization variables appear in the main part of the operator as in, e.g.,
material or topology optimization problems.
In fact, the question of convergence of topological derivative-based methods is mentioned as an open problem in \cite[Section 5]{NovotnySokoOwskiZochowski2019}.

\subsubsection*{Notation}

We will denote the Lebesgue measure of a measurable set $A\subset \R^d$ by $|A|$.
For $r>0$ and $x\in \R^d$, let $B_r(x)$ be the open ball with radius $r$ centered at $x$. Its Lebesgue measure will be denoted by $|B_r|$.
We set $\bar \R:=\R \cup \{+\infty\}$. For a function $g:\R\to \bar \R$, we set $\dom g:=\{u\in \R: \ g(u)<+\infty\}$.
The subdifferential of a convex function $g$ at $u$ will be denoted by $\partial g(u)$.
We will write $x^+ := \max(x,0)$ and $x^-:=\min(x,0)$ for $x\in\R$.

\subsubsection*{Convention}

In the following, we will always take Lebesgue measurable subsets of $\Omega$ only, without explicitly mentioning.

\section{Assumptions and preliminary results}

Throughout this paper, we will work with the following assumptions concerning the problem \eqref{eq002}

\begin{enumerate}[label=\bfseries (A\arabic*)]
 \item\label{ass1_Omega}
	$\Omega \subset \R^d$ is Lebesgue measurable with $|\Omega|<\infty$.
 \item\label{ass1_y}
	$Y$ is a real Hilbert space, $S\in \L(L^2(\Omega) ,Y)$, $y_d \in Y$.
 \item\label{ass1_g}
	$g:\R\to \bar \R$ is proper, convex, lower semi-continuous. In addition, $g(u)\ge0$ for all $u\in \R$, and $g(u)=0$ if and only if $u=0$.
 \item\label{ass1_strongc}
	There is $\mu>0$ such that
 \[
  \frac\mu2 \lambda(1-\lambda) |u-v|^2 + g(\lambda u+(1-\lambda)v) \le \lambda g(u) + (1-\lambda) g(v)
 \]
 for all $u,v\in \dom g$, $\lambda \in (0,1)$.
 \item\label{ass1_smooth}
	There is $q > 6$ such that $S^*S \in \L(L^2(\Omega), L^q(\Omega))$ and $S^*y_d\in L^q(\Omega)$, where $S^*\in \L(Y,L^2(\Omega))$ denotes the Hilbert space-adjoint of $S$.

 \item\label{ass1_beta}
	$\beta\in L^1(\Omega)$.
\end{enumerate}

Let us comment on these assumptions. As we plan to use the Lebesgue differentiation theorem, we assume that the underlying measure space
is induced by the Lebesgue measure of $\R^d$ in \ref{ass1_Omega}.
Conditions \ref{ass1_y}, \ref{ass1_g}, \ref{ass1_strongc} imply the well-posedness of the problem $\min_{u\in L^2(\Omega)} J(u,A)$ for fixed $A$.
Assumption \ref{ass1_strongc} is strong convexity of the function $g$. The results of the paper are still valid in the non-strongly convex case ($\mu=0$)
under slightly strengthened assumptions on $g$ and $S$, we will comment on this in  \cref{sec_nonstronglyconvex}.
Condition \ref{ass1_smooth} is a mild assumption on the smoothing properties of the operators $S$ and $S^*$.
It implies that certain remainder terms in the expansion of topological derivatives are of higher order, see \cref{thm_top_der}.

We will explicitly mention in upcoming, important results (theorems and propositions), which of these assumptions are used.
If the strong convexity assumption is not mentioned then $\mu$ can be taken equal to zero.

\begin{example}\label{ex_elliptic}
 Let us comment on the fulfillment of these assumptions for the introductory example \eqref{eqintro_ex_001}--\eqref{eqintro_ex_003}.
 Let $\Omega \subset \R^d$ be a bounded domain.
 The partial differential equation \eqref{eqintro_ex_002} is uniquely solvable in the weak sense, so that the mapping $S:u \mapsto y$
 is linear and continuous from $L^2(\Omega)$ to $H^1_0(\Omega)$.
 The functional \eqref{eqintro_ex_001} requires the choice $Y:=L^2(\Omega)$, so that $S$ will be considered as operator on $L^2(\Omega)$, which makes $S$ self-adjoint.
 Due to the classical result \cite{Stampacchia1965}, $S$ is continuous from $L^2(\Omega)$ to $L^\infty(\Omega)$ if $d\le 3$.
 By \cite[Theorem 18]{BrezisStrauss1973}, $S$ and $S^*$ are in $\L( L^2(\Omega), L^{10/3}(\Omega))$
 and $\L(L^{10/3}(\Omega)),L^{10}(\Omega))$ for all $d\le 10$.
 And \ref{ass1_y} and \ref{ass1_smooth} are satisfied for this example provided $y_d \in L^{10/3}(\Omega)$ and $d\le 10$.
\end{example}

\begin{example}\label{ex_parabolic}
 The following distributed control problem subject to a parabolic equation can also be put into the framework above: Minimize $\int_Q \frac12(y-y_d)^2 + g(u) \dx\dt$
 subject to the parabolic equation $y_t - \Delta y =u$ in $Q$, $y=0$ on $(0,T)\times \partial D$, and $y(\cdot, 0)=0$ on $D$, where $D\subset \R^d$ is a bounded domain, $T>0$, and $Q:=(0,T) \times D$.
 We set $Y:=L^2(Q)$. The corresponding solution operator $S$ is continuous from $L^2(Q)$ to $L^2(0,T; H^1_0(D))\cap H^1(0,T; H^{-1}(D)) $.
 Its adjoint operator $S^*$ is given as the solution operator of the adjoint equation, i.e., $p=S^*z$, where $p$ solves $-p_t - \Delta p =z$, $p(T)=0$.
 According to \cite[Theorem 2.3]{CasasWachsmuth2022}, $S$ and $S^*$ are in $\L( L^2(Q), L^{10/3}(Q))$
 and $\L(L^{10/3}(Q)),L^{10}(Q))$ for all $d\le 8$.
 If $y_d \in L^{10/3}(Q)$ then \ref{ass1_y} and \ref{ass1_smooth} are fulfilled for all $d\le 8$.
 Note that in this example the control domain $\Omega$ has to be set to $\Omega:=Q = (0,T) \times D$.
\end{example}

\begin{example}\label{ex_hyperbolic}
One can also consider distributed control of the wave equation,
where we are interested in minimizing the same functional as in \cref{ex_parabolic}
subject to the wave equation
$y_{tt} - \Delta y =u$ in $Q$, $y=0$ on $(0,T)\times \partial D$, and $y(\cdot, 0)=y_t(\cdot,0)=0$ on $D$,
where $D\subset \R^d$ is a bounded domain, $T>0$, and $Q:=(0,T) \times D$.
Given $u\in L^2(Q)$, there is a unique weak solution $y\in L^\infty(0,T;H^1_0(D))$,
where $L^\infty(0,T;H^1_0(D))$ is continuously embedded into $L^q(Q)$ for all $q<\infty$ if $d\le 2$.
And \ref{ass1_y} and \ref{ass1_smooth} are fulfilled for $d\le2$.
Using improved regularity results and Strichartz estimates, see, e.g., \cite[Section 7.2]{Evans2010}, it should be possible to relax the requirement on $d$.
\end{example}

\subsection{Existence of minimizers of $J$ for fixed $A$}

Let $A\subset \Omega$ be given.
Here, we consider the problem
\begin{equation} \label{eq_p_a}
\tag{\mbox{\bfseries P\!${}_A$}}
 \min_{u\in L^2(\Omega)} J(u,A).
\end{equation}
where $J$ is given by \eqref{eq002}.
Note that due to the construction of $J$ and \ref{ass1_g}, we have
\begin{equation}\label{eq_J_chi}
 J(\chi_A u, A) \le J(u,A)
\end{equation}
for all $u\in L^2(\Omega)$.
Due to strong convexity of $g$ and $g(0)=0$ by \ref{ass1_g}, \ref{ass1_strongc}, we have
\begin{equation}\label{eq_g_coercive}
 g(u) \ge \frac\mu2 |u|^2 \quad\forall u\in \dom g.
\end{equation}
\begin{proposition}\label{prop_existence}
Assume \ref{ass1_Omega}, \ref{ass1_y}, \ref{ass1_g},  \ref{ass1_strongc}.
 Let $A\subset \Omega$ be given.  Then there is a uniquely determined minimizer $u_A$ of \eqref{eq_p_a}.
 Moreover,
\begin{equation}\label{eq_optcon_utimeschi}
 \chi_A u_A = u_A.
\end{equation}
\end{proposition}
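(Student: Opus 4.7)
The plan is to prove existence and uniqueness by the direct method of the calculus of variations, and then extract the support property from the inequality \eqref{eq_J_chi} together with uniqueness.

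First I would verify that $J(\cdot,A)$ is proper: the choice $u=0$ gives $g(0)=0$ by \ref{ass1_g} and $S(0)=0$, hence $J(0,A)=\tfrac12\|y_d\|_Y^2+\int_A\beta\,dx$, which is finite by \ref{ass1_y} and \ref{ass1_beta}. Next I would show coercivity on $L^2(\Omega)$: the strong convexity bound \eqref{eq_g_coercive} gives $\int_\Omega g(u)\,dx\ge\tfrac\mu2\|u\|_{L^2(\Omega)}^2$ on $\dom\int g$, while the tracking term and $\int_A\beta$ are bounded below, so $J(u,A)\to\infty$ as $\|u\|_{L^2(\Omega)}\to\infty$.

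Then I would establish weak lower semicontinuity. The map $u\mapsto S(\chi_A u)-y_d$ is affine and continuous from $L^2(\Omega)$ to $Y$, hence weakly continuous, and $\|\cdot\|_Y^2$ is weakly lower semicontinuous, so the first term of $J$ is weakly lsc. The integral functional $u\mapsto\int_\Omega g(u)\,dx$ is convex (by \ref{ass1_g}) and, by Fatou together with lower semicontinuity of $g$, is strongly lsc on $L^2(\Omega)$; being convex and strongly lsc on a Hilbert space it is weakly lsc. The $\beta$-term is independent of $u$. Taking a minimizing sequence $(u_n)$, coercivity gives boundedness in $L^2(\Omega)$, we extract a weakly convergent subsequence, and weak lower semicontinuity yields a minimizer $u_A$.

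Uniqueness follows from strict (in fact strong) convexity: \ref{ass1_strongc} makes $u\mapsto\int_\Omega g(u)\,dx$ strongly convex on $L^2(\Omega)$ (integrate the pointwise inequality against $x$), and the remaining terms of $J(\cdot,A)$ are convex, so $J(\cdot,A)$ is strictly convex and admits at most one minimizer. Finally, for the support identity \eqref{eq_optcon_utimeschi}, I apply \eqref{eq_J_chi} to the minimizer: $J(\chi_A u_A,A)\le J(u_A,A)$, so $\chi_A u_A$ is also a minimizer, and uniqueness forces $\chi_A u_A=u_A$. No step here looks like a real obstacle; the only thing to be careful about is that $\int g$ is allowed to take the value $+\infty$, which is handled uniformly by the proper/lsc/convex framework above.
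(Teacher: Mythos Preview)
Your proof is correct and follows essentially the same route as the paper's: coercivity from \eqref{eq_g_coercive}, weak lower semicontinuity from \ref{ass1_y} and \ref{ass1_g}, uniqueness from strong convexity \ref{ass1_strongc}, and the support identity from \eqref{eq_J_chi} combined with uniqueness. The only minor wrinkle is that you invoke \ref{ass1_beta} for properness while the proposition does not list it among its hypotheses, but since the $\beta$-term is independent of $u$ this is immaterial to the minimization.
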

\begin{proof}
 Due to \eqref{eq_g_coercive}, minimizing sequences of $J(\cdot,A)$ are bounded in $L^2(\Omega)$. In addition,
 $u\mapsto J(u,A)$ is weakly lower semi-continuous from $L^2(\Omega)$ to $\R$ because of \ref{ass1_y} and \ref{ass1_g}.
 The existence of solutions follows now by standard arguments. Uniqueness of solutions is  a consequence of strong convexity of g \ref{ass1_strongc}.
 The last claim follows from \eqref{eq_J_chi}.
\end{proof}


In all what follows, we will not make use of the unique solvability of \eqref{eq_p_a}.
We will just use that $u_A$ is any solution of \eqref{eq_p_a}.

\subsection{Optimality conditions for \eqref{eq_p_a}}

Let $A\subset \Omega$ be given, and let $u_A$ be a solution of \eqref{eq_p_a}.
Let us denote the associated state by
\begin{equation}\label{eq_def_y}
 y_A := S( \chi_A u)
\end{equation}
and adjoint state by
\begin{equation}\label{eq_def_p}
 p_A := S^*(y_A - y_d)  = S^*(S(\chi_A u)-y_d).
\end{equation}
Let $u\in L^2(\Omega)$ and $B\subset \Omega$ be given. Let $y:=S( \chi_B u)$.
Then by elementary calculations, we
find
\begin{equation}\label{eq_exp_ymyd}
\begin{aligned}
 \frac12\|y - y_d\|_Y^2 - \frac12\|y_A - y_d\|_Y^2 &= (y_A-y_d, y-y_A)_Y + \frac12\|y-y_A\|_Y^2 \\
 & = (p_A, \chi_B u- \chi_Au_A)+\frac12\|y-y_A\|_Y^2.
\end{aligned}
\end{equation}
For $B=A$, we get
\[
 \frac12\|y - y_d\|_Y^2 - \frac12\|y_A - y_d\|_Y^2  = (p_A, \chi_A( u- u_A) )+\frac12\|y-y_A\|_Y^2.
\]
Hence,  $\chi_Ap_A \in L^2(\Omega)$ is the Fr\'echet derivative of $u \mapsto  \frac12\|S( \chi_A u) - y_d\|_Y^2$ at $u_A$.

\begin{proposition}
Assume \ref{ass1_Omega}, \ref{ass1_y}, \ref{ass1_g}.
 Let $A\subset \Omega$ and let $u_A$ be a solution of \eqref{eq_p_a}.
 Let $p_A$ be given by \eqref{eq_def_p}.
 Then it holds
 \begin{equation}\label{eq_optcon_pw}
  - \chi_A(x)p_A(x) \in \partial g(u(x)) \quad \text{ for almost all } x\in \Omega
 \end{equation}
and
 \begin{equation}\label{eq_optcon_pmp}
 u_A(x) = \argmin_{u\in \R} \chi_A(x)p_A(x)\cdot u + g(u) \quad \text{ for almost all } x\in \Omega.
\end{equation}
\end{proposition}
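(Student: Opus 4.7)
The plan is to recognize the problem as the standard first-order condition for an unconstrained convex minimization in $L^2(\Omega)$ and then localize pointwise using Rockafellar's theorem on subdifferentials of integral functionals.

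First I would split the objective as $J(u,A) = F(u) + G(u) + c$, where
\[
F(u) := \tfrac12\|S(\chi_A u) - y_d\|_Y^2, \qquad G(u) := \int_\Omega g(u(x))\dx,
\]
and $c := \int_\Omega \chi_A \beta\dx$ is independent of $u$. The identity \eqref{eq_exp_ymyd} applied with $B=A$ shows that $F$ is Fréchet differentiable at $u_A$ with gradient $\chi_A p_A \in L^2(\Omega)$, where $p_A$ is as in \eqref{eq_def_p}. The functional $G$ is convex, proper (since $u_A$ is a minimizer we have $g(u_A)\in L^1(\Omega)$, so $u_A \in \dom G$), and lower semi-continuous on $L^2(\Omega)$ by \ref{ass1_g} together with Fatou's lemma.

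Next, since $u_A$ minimizes $F + G + c$, the sum rule for convex subdifferentials — applicable because $F$ is everywhere differentiable — yields
\[
0 \in \nabla F(u_A) + \partial G(u_A), \qquad \text{i.e.,} \qquad -\chi_A p_A \in \partial G(u_A).
\]
Now I invoke Rockafellar's characterization of the subdifferential of an integral functional with a normal convex integrand: $w \in \partial G(u_A)$ if and only if $w(x) \in \partial g(u_A(x))$ for almost every $x \in \Omega$. This immediately gives \eqref{eq_optcon_pw}.

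Finally, \eqref{eq_optcon_pmp} is a direct translation of \eqref{eq_optcon_pw} through the elementary equivalence
\[
-v \in \partial g(u) \;\Longleftrightarrow\; u \in \argmin_{w\in\R} \bigl(v\cdot w + g(w)\bigr),
\]
applied pointwise with $v := \chi_A(x) p_A(x)$ and $u := u_A(x)$. The only delicate point is the appeal to Rockafellar's measurable selection/integral subdifferential theorem; its hypotheses are satisfied here because $g$ is a proper, l.s.c.\ convex function on $\R$ (hence a normal integrand), $u_A \in \dom G$, and $-\chi_A p_A \in L^2(\Omega) \subset L^1_{\mathrm{loc}}(\Omega)$. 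Aside from this, every step is a routine application of convex calculus.
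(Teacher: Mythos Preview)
Your proof is correct and follows essentially the same route as the paper: show that the smooth part has Fr\'echet derivative $\chi_A p_A$, obtain $-\chi_A p_A \in \partial G(u_A)$ via the first-order condition for the sum of a differentiable convex function and a proper l.s.c.\ convex function, and then localize pointwise using Rockafellar's characterization of the subdifferential of an integral functional. The paper simply cites Ekeland--Temam and Rockafellar for these two steps, whereas you spell out a few more of the verifications; the argument is otherwise identical.
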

\begin{proof}
Let us denote $G(u):= \int_\Omega g(u(x))\dx$.
As argued above,  $\chi_Ap_A \in L^2(\Omega)$ is the Fr\'echet derivative of $u \mapsto  \frac12\|S( \chi_A u) - y_d\|_Y^2$ at $u_A$.
Then by well-known results, see, e.g., \cite[Proposition II.2.2]{EkelandTemam1999}, we get
$-\chi_A p_A \in \partial G(u_A)$.
Using \cite[Corollary 3E]{Rockafellar1976}, this is equivalent to the pointwise a.e.\@ inclusion \eqref{eq_optcon_pw}, which in turn is equivalent
to \eqref{eq_optcon_pmp}.
\end{proof}

Condition \eqref{eq_optcon_pmp} can be interpreted as Pontryagin's maximum principle for \eqref{eq_p_a}.

\subsection{Boundedness results for solutions of \eqref{eq_p_a}}

In this section, we will derive bounds on $(u_A,y_A,p_A)$ that are uniform with respect to $A\subset \Omega$.

\begin{lemma}\label{lem_bound_uy}
 There is  $M>0$ such that
 \[
  \|y_A - y_d\|_Y  + \|u_A\|_{L^2(\Omega)} \le M
 \]
for all $A\subset \Omega$.
\end{lemma}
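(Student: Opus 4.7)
The plan is to compare the minimizer $u_A$ against the trivial competitor $u\equiv 0$. Since $g(0)=0$ by \ref{ass1_g} and $S(\chi_A\cdot 0)=0$, one has
\[
 J(0,A) = \tfrac12\|y_d\|_Y^2 + \int_A \beta(x)\dx.
\]
Optimality of $u_A$ gives $J(u_A,A)\le J(0,A)$, i.e.
\[
 \tfrac12\|y_A-y_d\|_Y^2 + \int_\Omega g(u_A(x))\dx + \int_A \beta(x)\dx \le \tfrac12\|y_d\|_Y^2 + \int_A \beta(x)\dx.
\]
The crucial observation is that the $\beta$-terms cancel exactly, because the support set $A$ is the same on both sides; hence no $L^1$-estimate on $\beta$ is needed here and \ref{ass1_beta} is used only implicitly to guarantee that $J(0,A)$ is finite. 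After cancellation we obtain
\[
 \tfrac12\|y_A-y_d\|_Y^2 + \int_\Omega g(u_A(x))\dx \le \tfrac12\|y_d\|_Y^2.
\]

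Next I invoke the coercivity \eqref{eq_g_coercive}, namely $g(u)\ge \tfrac{\mu}{2}|u|^2$, which follows from strong convexity \ref{ass1_strongc} together with $g(0)=0$ from \ref{ass1_g}. This yields
\[
 \tfrac12\|y_A-y_d\|_Y^2 + \tfrac{\mu}{2}\|u_A\|_{L^2(\Omega)}^2 \le \tfrac12\|y_d\|_Y^2.
\]
Both terms on the left-hand side are nonnegative, so one can extract the separate bounds $\|y_A-y_d\|_Y \le \|y_d\|_Y$ and $\|u_A\|_{L^2(\Omega)}\le \mu^{-1/2}\|y_d\|_Y$. Taking $M:=\|y_d\|_Y(1+\mu^{-1/2})$ (or any slightly larger constant) then gives the claim, with $M$ independent of $A$.

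There is no serious obstacle: the only point that requires attention is the cancellation of the $\beta$-integrals, which is what makes the estimate uniform in $A$ even though $\beta$ may change sign. The argument tacitly uses \ref{ass1_strongc} in the form \eqref{eq_g_coercive}; without strong convexity one would only obtain a bound on $\|y_A-y_d\|_Y$ and would need an alternative device (as presumably will be provided in \cref{sec_nonstronglyconvex}) to bound $u_A$.
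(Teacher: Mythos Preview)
Your proof is correct and follows exactly the approach indicated in the paper: the inequality $J(u_A,A)\le J(0,A)$ together with the coercivity estimate \eqref{eq_g_coercive}. You have simply written out in detail what the paper states in one line, including the explicit cancellation of the $\beta$-integrals and the resulting constant $M$.
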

\begin{proof}
 This follows directly from $J(A,u_A) \le J(A,0)$ and \eqref{eq_g_coercive}.
\end{proof}


\begin{corollary}\label{cor_linfty_bound}
 There is $P>0$ such that
 \[
  \|u_A\|_{L^q(\Omega)}\le P , \quad \|p_A\|_{L^q(\Omega)} \le P
 \]
for all $A\subset \Omega$, where $q$ is from \ref{ass1_smooth}.
\end{corollary}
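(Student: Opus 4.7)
The plan is to first bound $p_A$ in $L^q$ using the smoothing assumption \ref{ass1_smooth}, and then to derive a pointwise bound on $u_A$ by $p_A$ using the optimality condition together with strong convexity, which yields the $L^q$ bound on $u_A$ as well.

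For the bound on $p_A$, I would write
\[
 p_A = S^*S(\chi_A u_A) - S^*y_d.
\]
By \ref{ass1_smooth}, $S^*S \in \L(L^2(\Omega), L^q(\Omega))$ and $S^*y_d \in L^q(\Omega)$. Using $\|\chi_A u_A\|_{L^2(\Omega)} \le \|u_A\|_{L^2(\Omega)} \le M$ from \cref{lem_bound_uy}, this directly gives a uniform bound
\[
 \|p_A\|_{L^q(\Omega)} \le \|S^*S\|_{\L(L^2,L^q)} M + \|S^*y_d\|_{L^q(\Omega)}.
\]

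For the bound on $u_A$, I would exploit the Pontryagin-type condition \eqref{eq_optcon_pmp}. On $\Omega\setminus A$ one has $u_A = 0$ by \eqref{eq_optcon_utimeschi}. On $A$, comparing the minimum value at $u_A(x)$ with the value at $0$ (using $g(0) = 0$ from \ref{ass1_g}) yields
\[
 p_A(x) u_A(x) + g(u_A(x)) \le 0.
\]
Combining this with the coercivity estimate $g(u) \ge \tfrac\mu2 |u|^2$ established in \eqref{eq_g_coercive} and dividing by $|u_A(x)|$ (on the set where it is nonzero) gives the pointwise bound $|u_A(x)| \le \tfrac{2}{\mu} |p_A(x)|$ a.e., so $\|u_A\|_{L^q(\Omega)} \le \tfrac{2}{\mu}\|p_A\|_{L^q(\Omega)}$.

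The only mild subtlety will be to note explicitly that \ref{ass1_strongc} is needed here (the statement does not list the assumptions, but the bound on $u_A$ in $L^q$ for $q>2$ cannot be obtained from the $L^2$ bound of \cref{lem_bound_uy} alone without something like the pointwise estimate above). A cleaner alternative would be to invoke the optimality condition \eqref{eq_optcon_pw} together with the strong monotonicity of $\partial g$ implied by \ref{ass1_strongc}: since $0 \in \partial g(0)$, strong monotonicity gives $\mu |u_A(x)|^2 \le -\chi_A(x) p_A(x) u_A(x)$ a.e., leading to the same pointwise estimate. Setting $P := \max\bigl(\tfrac{2}{\mu},1\bigr)\bigl(\|S^*S\|_{\L(L^2,L^q)} M + \|S^*y_d\|_{L^q(\Omega)}\bigr)$ then completes the proof.
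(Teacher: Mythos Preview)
Your proposal is correct and follows essentially the same approach as the paper: bound $p_A$ via the smoothing assumption \ref{ass1_smooth} together with the uniform $L^2$-bound on $u_A$ from \cref{lem_bound_uy}, and then derive the pointwise estimate $|u_A(x)|\le \tfrac{2}{\mu}|p_A(x)|$ from the optimality condition \eqref{eq_optcon_pmp} compared at $u=0$ combined with the coercivity \eqref{eq_g_coercive}. Your remark that \ref{ass1_strongc} is implicitly needed here is also accurate.
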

\begin{proof}
First, we have
\[\begin{aligned}
\|p_A\|_{L^q(\Omega)} &\le \|S^*S\|_{\L(L^2(\Omega), L^q(\Omega))} \|u_A\|_{L^2(\Omega)} + \|S^*y_d\|_{L^q(\Omega)} \\
&\le  \|S^*S\|_{\L(L^2(\Omega), L^q(\Omega))}M+ \|S^*y_d\|_{L^q(\Omega)}
\end{aligned}\]
by \ref{ass1_smooth}
with $M$ as in \cref{lem_bound_uy}.
Using \eqref{eq_optcon_pmp} with $u=0$, \ref{ass1_g}, and \eqref{eq_g_coercive}, we have for almost all $x\in \Omega$
\[
\frac\mu2 |u_A(x)|^2 \le g(u_A(x)) \le - \chi_A(x) p_A(x) u_A(x)
\]
which implies $\frac\mu2 |u_A(x)| \le |p_A(x)|$ and  $\|u_A\|_{L^q(\Omega)}\le 2\mu^{-1}\|p_A\|_{L^q(\Omega)} $.
\end{proof}

\section{Analysis of the value function}
\label{sec3}

In this section, we will investigate stability properties of $A\mapsto (u_A,y_A,p_A)$, where $y_A$ and $p_A$ solve \eqref{eq_def_y} and \eqref{eq_def_p}.
The goal is to derive formulas for the topological derivative of $A\mapsto J(A)$, where $J(A)$ is the value function
defined in \eqref{eq003} by
\[
 J(A) = \min_{u\in L^2(\Omega)} J(u,A).
 \]
For brevity, we refer to tuples $(u_A,y_A,p_A)$, where $u_A$ solves \eqref{eq_p_a} and $y_A,p_A$ are given by \eqref{eq_def_y} and \eqref{eq_def_p}
as solutions of \eqref{eq_p_a}.

\subsection{Sensitivity analysis of \eqref{eq_p_a} with respect to $A$}

%
Let us start with the following preliminary expansion.

\begin{lemma}\label{lem_diff_J_final}
Let $A,B\subset \Omega$, and let $(u_A,y_A,p_A)$ and $(u_B,y_B,p_B)$ be  solutions of \eqref{eq_p_a}
and \PB.
Then it holds
\begin{multline*}
J(A,u_A) - J(B,u_B)  +  \frac12\|y_B-y_A\|_Y^2 \\
=\int_\Omega  g(u_A)-g(u_B) + \chi_Ap_A( u_A - u_B) + (\chi_A-\chi_B)(\beta+ p_Au_B )\dx.
\end{multline*}
\end{lemma}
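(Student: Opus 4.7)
The plan is to unwind the definition of $J(u,A)$, apply the key expansion \eqref{eq_exp_ymyd}, and then rearrange the resulting terms involving $p_A$ into the form claimed.

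First I would write out the difference of the objective values directly from the definition \eqref{eq002}:
\[
J(A,u_A) - J(B,u_B) = \tfrac12\|y_A-y_d\|_Y^2 - \tfrac12\|y_B-y_d\|_Y^2 + \int_\Omega g(u_A)-g(u_B) + (\chi_A-\chi_B)\beta\dx,
\]
using $y_A = S(\chi_A u_A)$ and $y_B = S(\chi_B u_B)$. This is just bookkeeping.

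Next I would invoke \eqref{eq_exp_ymyd} with $u:=u_B$ and the set there taken to be $B$, so that $y = S(\chi_B u_B) = y_B$. This gives
\[
\tfrac12\|y_B-y_d\|_Y^2 - \tfrac12\|y_A-y_d\|_Y^2 = (p_A,\chi_B u_B - \chi_A u_A) + \tfrac12\|y_B - y_A\|_Y^2.
\]
Moving the squared-norm term to the left-hand side of the target identity cancels the $+\frac12\|y_B-y_A\|_Y^2$ and leaves, after substitution,
\[
J(A,u_A)-J(B,u_B) + \tfrac12\|y_B-y_A\|_Y^2 = \int_\Omega g(u_A)-g(u_B) + (\chi_A-\chi_B)\beta\dx - (p_A,\chi_B u_B - \chi_A u_A).
\]

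Finally I would massage the scalar product into the claimed pointwise form. A short algebraic check,
\[
-p_A(\chi_B u_B - \chi_A u_A) = \chi_A p_A(u_A-u_B) + (\chi_A-\chi_B) p_A u_B,
\]
verified by expanding both sides, completes the identification. Combining this with the preceding display yields the stated formula. There is no real obstacle here: the essential content is \eqref{eq_exp_ymyd} (which already uses the definition of the adjoint), and the remainder is a straightforward recombination of the $p_A$-terms so that the $\chi_A p_A(u_A-u_B)$ piece aligns with the optimality condition to be exploited later.
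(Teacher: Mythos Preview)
Your proof is correct and follows essentially the same approach as the paper: apply the expansion \eqref{eq_exp_ymyd} to handle the quadratic state term, then perform the same algebraic rearrangement $p_A(\chi_A u_A-\chi_B u_B)=\chi_A p_A(u_A-u_B)+(\chi_A-\chi_B)p_A u_B$ to obtain the claimed form.
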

\begin{proof}
 Doing the expansion of $y\mapsto \frac12\|y-y_d\|_Y^2$ similarly as in \eqref{eq_exp_ymyd}, we have
\begin{equation} \label{eq3101}
 J(A,u_A) - J(B,u_B) +  \frac12\|y_B-y_A\|_Y^2
 = \int_\Omega  g(u_A)-g(u_B) + p_A( \chi_Au_A - \chi_Bu_B) + (\chi_A-\chi_B) \beta\dx.
\end{equation}
In addition, we have
\[
 \int_\Omega p_A( \chi_Au_A - \chi_Bu_B)\dx  =\int_\Omega \chi_A p_A( u_A - u_B) - (\chi_B-\chi_A)p_Au_B \dx ,
\]
which is the claim.
\end{proof}

%

\begin{lemma}\label{lem_diff_uy}
Let $A,B\subset \Omega$, and let $(u_A,y_A,p_A)$ and $(u_B,y_B,p_B)$ be solutions of \eqref{eq_p_a}
and \PB.
Then it holds
\[
\mu\|u_B-u_A\|_{L^2(\Omega)}^2 +  \|y_B-y_A\|_Y^2
\le\int_\Omega  (\chi_A-\chi_B)(p_Au_B - p_Bu_A)\dx
\]
with $\mu\ge0$ as in \ref{ass1_strongc}.
\end{lemma}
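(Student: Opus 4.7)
The plan is to start from the expansion in \cref{lem_diff_J_final}, write it down twice (once for $(A,B)$ and once with $A$ and $B$ swapped), and add the two identities. The role of strong convexity will be to absorb the terms involving $g$ together with the derivative term $\chi_A p_A(u_A - u_B)$ into a nonpositive contribution $-\tfrac{\mu}{2}|u_A-u_B|^2$.

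More concretely, by the pointwise optimality condition \eqref{eq_optcon_pw}, $-\chi_A p_A(x) \in \partial g(u_A(x))$ for almost every $x$. The strong convexity assumption \ref{ass1_strongc} implies the standard subgradient inequality
\[
 g(v) \ge g(u) + \xi(v-u) + \frac{\mu}{2}|v-u|^2 \qquad \forall \, \xi \in \partial g(u),
\]
so choosing $u = u_A(x)$, $v = u_B(x)$, $\xi = -\chi_A(x) p_A(x)$ gives a.e.\@
\[
 g(u_A) - g(u_B) + \chi_A p_A(u_A - u_B) \le -\frac{\mu}{2}|u_A - u_B|^2 .
\]
The analogous inequality with $A$ and $B$ interchanged holds via the optimality condition for $u_B$.

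Plugging the first pointwise bound into \cref{lem_diff_J_final} yields
\[
 J(A,u_A) - J(B,u_B) + \tfrac12\|y_B-y_A\|_Y^2 + \tfrac{\mu}{2}\|u_B-u_A\|_{L^2(\Omega)}^2 \le \int_\Omega (\chi_A-\chi_B)(\beta + p_A u_B)\dx,
\]
and the symmetric choice (swap $A\leftrightarrow B$) gives
\[
 J(B,u_B) - J(A,u_A) + \tfrac12\|y_A-y_B\|_Y^2 + \tfrac{\mu}{2}\|u_A-u_B\|_{L^2(\Omega)}^2 \le \int_\Omega (\chi_B-\chi_A)(\beta + p_B u_A)\dx.
\]
Adding these two inequalities, the $J$-differences cancel, the $\beta$ terms cancel (since $(\chi_A-\chi_B) + (\chi_B-\chi_A) = 0$), the norm-squared terms double, and the remaining right-hand side collapses to $\int_\Omega (\chi_A-\chi_B)(p_A u_B - p_B u_A)\dx$, which is exactly the claimed estimate.

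I do not expect any serious obstacle here; the only point that requires a little care is invoking the subgradient form of strong convexity a.e.\@ and integrating, which is justified because $p_A u_A$, $p_A u_B$, $p_B u_A$, $p_B u_B$ are all in $L^1(\Omega)$ by \cref{cor_linfty_bound} and Hölder's inequality, so every integrand appearing above is integrable and the manipulations are legitimate.
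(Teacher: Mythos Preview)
Your proof is correct and follows essentially the same route as the paper: write \cref{lem_diff_J_final} for $(A,B)$ and for $(B,A)$, add, and use strong convexity together with the optimality condition \eqref{eq_optcon_pw}. The only cosmetic difference is the order of operations—the paper first adds the two identities (keeping them as equalities, so the $g$-terms cancel directly) and then applies the strong monotonicity of $\partial g$ to the combined term $\int_\Omega(\chi_Ap_A-\chi_Bp_B)(u_A-u_B)\dx\le -\mu\|u_A-u_B\|_{L^2(\Omega)}^2$, whereas you apply the subgradient inequality (with $\tfrac{\mu}{2}$) to each identity separately before adding; the two $\tfrac{\mu}{2}$ contributions then sum to $\mu$.
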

\begin{proof}
Due to \cref{lem_diff_J_final}, we have
\begin{multline*}
J(A,u_A) - J(B,u_B)  +  \frac12\|y_B-y_A\|_Y^2 \\
=\int_\Omega  g(u_A)-g(u_B) + \chi_Ap_A( u_A - u_B) + (\chi_A-\chi_B)(\beta+ p_Au_B )\dx
\end{multline*}
as well as
\begin{multline*}
J(B,u_B) - J(A,u_A)  +  \frac12\|y_A-y_B\|_Y^2 \\
=\int_\Omega  g(u_B)-g(u_A) + \chi_Bp_B( u_B - u_A) + (\chi_B-\chi_A)(\beta+ p_Bu_A )\dx.
\end{multline*}
Adding both equations gives
\[
 \|y_A-y_B\|_Y^2 =\int_\Omega (\chi_Ap_A-\chi_Bp_B)( u_A - u_B) +  (\chi_A-\chi_B)( p_Au_B - p_Bu_A )\dx.
\]
Due the inequality in \ref{ass1_strongc} and the optimality condition \eqref{eq_optcon_pw}, we have
\begin{equation}
 \int_\Omega (\chi_Ap_A-\chi_Bp_B)( u_A - u_B) \dx \le -\mu \|u_A-u_B\|_{L^2(\Omega)}^2,
\end{equation}
and the claim is proven.
\end{proof}

Note that the previous result remains true with $\mu=0$ in the non-strongly convex case.
Now we can prove the main result of this section,
which is a stability estimate of solutions of \eqref{eq_p_a} with respect to variations of $A$ (or $\chi_A$).
In the proof, we will use the fact that for characteristic functions
\[
 \|\chi_A - \chi_B \|_{L^s(\Omega)} =  \|\chi_A - \chi_B \|_{L^1(\Omega)}^{\frac1s} \quad \forall s\in (1,\infty).
\]

\begin{theorem}\label{thm_stability}
Assume \ref{ass1_Omega}, \ref{ass1_y}, \ref{ass1_g}, \ref{ass1_strongc}, \ref{ass1_smooth}.
Then there is a constant $K>0$ such that for
all $A,B\subset \Omega$
\[
  \|p_A-p_B\|_{L^q(\Omega)} +   \|u_A-u_B\|_{L^2(\Omega)} + \|y_B-y_A\|_Y\le K \|\chi_A-\chi_B\|_{L^1(\Omega)}^{\frac12-\frac1q},
\]
where $(u_A,y_A,p_A)$ and $(u_B,y_B,p_B)$ are solutions of \eqref{eq_p_a} and \PB, and $q$ is from  \ref{ass1_smooth}.
\end{theorem}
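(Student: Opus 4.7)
The plan is to combine the energy-type inequality from \cref{lem_diff_uy} with the uniform $L^q$-bounds from \cref{cor_linfty_bound} and then transport the resulting $L^2$-bound on $u_A-u_B$ into an $L^q$-bound on $p_A-p_B$ via assumption \ref{ass1_smooth}. The key observation is that the right-hand side of \cref{lem_diff_uy} is supported on $A\triangle B$, and both factors $p_Au_B$ and $p_Bu_A$ are uniformly bounded in $L^{q/2}(\Omega)$, so a Hölder estimate immediately yields the desired power of $\|\chi_A-\chi_B\|_{L^1}$.

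More concretely, first I would apply Hölder's inequality to the integrand in \cref{lem_diff_uy} with the dual exponents $q/(q-2)$ and $q/2$, which gives
\[
\int_\Omega (\chi_A-\chi_B)(p_Au_B-p_Bu_A)\dx \le \|\chi_A-\chi_B\|_{L^{q/(q-2)}(\Omega)}\,\|p_Au_B-p_Bu_A\|_{L^{q/2}(\Omega)}.
\]
The second factor is bounded by $2P^2$ using another Hölder step and \cref{cor_linfty_bound}. For the first factor I would use that a characteristic-function difference satisfies $\|\chi_A-\chi_B\|_{L^s(\Omega)} = \|\chi_A-\chi_B\|_{L^1(\Omega)}^{1/s}$, so with $s=q/(q-2)$ we get the exponent $(q-2)/q = 1-2/q$. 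Plugging back into \cref{lem_diff_uy} therefore produces
\[
\mu\|u_B-u_A\|_{L^2(\Omega)}^2 + \|y_B-y_A\|_Y^2 \le 2P^2 \|\chi_A-\chi_B\|_{L^1(\Omega)}^{1-2/q},
\]
and taking square roots (together with $\sqrt{a+b}\le\sqrt{a}+\sqrt{b}$) yields the claimed estimate for the $u$- and $y$-terms with exponent $\frac12-\frac1q$.

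For the adjoint part I would use that \eqref{eq_optcon_utimeschi} gives $\chi_A u_A = u_A$ and $\chi_B u_B = u_B$, so that
\[
p_A - p_B = S^*S(\chi_Au_A - \chi_Bu_B) = S^*S(u_A - u_B).
\]
Then \ref{ass1_smooth} provides $\|p_A-p_B\|_{L^q(\Omega)} \le \|S^*S\|_{\L(L^2,L^q)}\|u_A-u_B\|_{L^2(\Omega)}$, and combining with the bound on $\|u_A-u_B\|_{L^2}$ from the previous step yields the full estimate with a suitable constant $K$.

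The main potential obstacle is purely bookkeeping: one must correctly track the exponent conversion $\|\chi_A-\chi_B\|_{L^s}=\|\chi_A-\chi_B\|_{L^1}^{1/s}$ and verify that the choice $s=q/(q-2)$ (which requires $q>2$, guaranteed by \ref{ass1_smooth} since $q>6$) produces exactly the exponent $\frac12-\frac1q$ after the square root. No further analytical difficulty is expected, since strong convexity \ref{ass1_strongc} provides $\mu>0$ and hence coercivity of the left-hand side in $u$, while uniform $L^q$-bounds on the states and adjoints are already supplied by \cref{cor_linfty_bound}.
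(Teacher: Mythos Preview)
Your proposal is correct and follows essentially the same approach as the paper: combine the energy inequality of \cref{lem_diff_uy} with the uniform bounds of \cref{cor_linfty_bound} via H\"older, then transfer the $L^2$-control on $u_A-u_B$ to an $L^q$-estimate on $p_A-p_B$ through \ref{ass1_smooth}. The only difference is a minor technical one in the H\"older splitting: the paper decomposes $p_Au_B-p_Bu_A=p_A(u_B-u_A)+(p_A-p_B)u_A$ and uses a three-exponent H\"older with $\frac1s+\frac1q+\frac12=1$, which produces a term $\|u_B-u_A\|_{L^2}+\|p_A-p_B\|_{L^q}$ on the right that must then be absorbed into the left-hand side; your direct bound $\|p_Au_B-p_Bu_A\|_{L^{q/2}}\le 2P^2$ (using the $L^q$-bound on $u$ from \cref{cor_linfty_bound}) avoids this absorption step and is slightly cleaner.
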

\begin{proof}
From \ref{ass1_smooth}, we find
\[
\|p_A-p_B\|_{L^q(\Omega)} \le \|S^*S\|_{\L(L^2(\Omega), L^q(\Omega))} \|u_A-u_B\|_{L^2(\Omega)} .
\]
Define $\mu':=\mu / \|S^*S\|_{\L(L^2(\Omega), L^q(\Omega))}^2$.
Let $s$ be such that $\frac1s+\frac1q+\frac12=1$.
From the inequality of \cref{lem_diff_uy}, we obtain with H\"older's inequality
\begin{multline*}
\frac{\mu'}2 \|p_A-p_B\|_{L^q(\Omega)}^2 + \frac \mu2\|u_B-u_A\|_{L^2(\Omega)}^2 + \|y_B-y_A\|_Y^2
\\
\begin{aligned}
& \le \mu\|u_B-u_A\|_{L^2(\Omega)}^2 + \|y_B-y_A\|_Y^2 \\
& \le \int_\Omega  (\chi_A-\chi_B)(p_Au_B - p_Bu_A)\dx\\
&\le \|\chi_A-\chi_B\|_{L^s(\Omega)} ( \|p_A\|_{L^q(\Omega)} \|u_B-u_A\|_{L^2(\Omega)} + \|p_A-p_B\|_{L^q(\Omega)} \|u_A\|_{L^2(\Omega)}) \\
& \le (P+M) \|\chi_A-\chi_B\|_{L^1(\Omega)}^{\frac1s} (\|u_B-u_A\|_{L^2(\Omega)} + \|p_A-p_B\|_{L^q(\Omega)}),
\end{aligned}
\end{multline*}
where $P$ and $M$ are from \cref{cor_linfty_bound} and \cref{lem_bound_uy},
and the claim is proven.
\end{proof}

\subsection{Expansions of the value function}

Let us define $H:\R\times \R \to \bar \R$ by
\[
 H(u,p) := p\cdot u + g(u).
\]
This function reminds of the Hamiltonian of optimal control problems. In the sequel, we need its infimum with respect to $u$,
\[
 \min_{u\in \R} H(u,p) = \min_{u\in \R} \left( p\cdot u + g(u) \right)= - \sup_{u\in \R} \left( -p\cdot u - g(u) \right)
 = -g^*(-p),
\]
where $g^*$ is the convex conjugate to $g$.
The existence of this minimum follows from the properties of $g$ in \ref{ass1_g} and the coercivity estimate \eqref{eq_g_coercive}.
Let us denote this function by $\bar H$, i.e.,
\[
 \bar H(p):=  \min_{u\in \R} H(u,p) = -g^*(-p).
\]
If $(u_A,y_A,p_A)$ is a solution of \eqref{eq_p_a} then we have
$\bar H(p_A(x)) = H(u_A(x),p_A(x))$ for almost all $x\in A$ by \eqref{eq_optcon_pmp}.
We will need some Lipschitz estimates of $\bar H$.

\begin{lemma}\label{lem_barH_lipschitz}
Let $A,B\subset \Omega$, and let $(u_A,y_A,p_A)$ and $(u_B,y_B,p_B)$ be  solutions of \eqref{eq_p_a}
and \PB. Then we have
\[
 \| \bar H(p_A) -  \bar H(p_B)\|_{L^{q/2}(\Omega)} \le P \|p_A-p_B\|_{L^q(\Omega)} \le  PK \|\chi_A-\chi_B\|_{L^1(\Omega)}^{\frac12-\frac1q},
\]
where $P$ and $K$ are from \cref{cor_linfty_bound} and \cref{thm_stability}, respectively.
\end{lemma}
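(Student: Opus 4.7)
The second inequality, $P\|p_A-p_B\|_{L^q(\Omega)} \le PK\|\chi_A-\chi_B\|_{L^1(\Omega)}^{1/2-1/q}$, is simply the stability estimate of \cref{thm_stability} scaled by $P$, so the substance of the lemma lies in the first inequality. The strategy is to establish a pointwise Lipschitz estimate for $\bar H$ in terms of a pointwise minimizer, then integrate via H\"older.

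For each fixed $p\in\R$, strong convexity of $g$ together with $g(0)=0$ from \ref{ass1_g}--\ref{ass1_strongc} ensures that $u\mapsto H(u,p)=p\,u+g(u)$ is coercive and admits a unique minimizer $u^*(p)\in\R$ with $\bar H(p)=H(u^*(p),p)$. A standard envelope estimate then yields, for arbitrary $p_1,p_2\in\R$,
\[
\bar H(p_1) - \bar H(p_2) \le H(u^*(p_2),p_1) - H(u^*(p_2),p_2) = (p_1-p_2)\,u^*(p_2),
\]
and by the symmetric bound
\[
|\bar H(p_1) - \bar H(p_2)| \le |p_1-p_2|\,\max\bigl\{|u^*(p_1)|,\,|u^*(p_2)|\bigr\}.
\]

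Next I would control $|u^*(p)|$ by testing the minimality against $u=0$: the inequality $H(u^*(p),p)\le H(0,p)=0$ combined with the coercivity estimate \eqref{eq_g_coercive} gives $\tfrac{\mu}{2}|u^*(p)|^2 \le -p\,u^*(p)\le |p|\,|u^*(p)|$, hence $|u^*(p)|\le (2/\mu)|p|$. Applying the pointwise envelope bound at $p=p_A(x)$ and $p=p_B(x)$ and then invoking H\"older's inequality with exponents $q$ and $q$ (so that $\tfrac{1}{q}+\tfrac{1}{q}=\tfrac{2}{q}$) yields
\[
\|\bar H(p_A)-\bar H(p_B)\|_{L^{q/2}(\Omega)} \le \|p_A-p_B\|_{L^q(\Omega)}\cdot\max\bigl\{\|u^*(p_A)\|_{L^q(\Omega)},\|u^*(p_B)\|_{L^q(\Omega)}\bigr\},
\]
and the last factor is bounded by $(2/\mu)\max\{\|p_A\|_{L^q},\|p_B\|_{L^q}\}\le (2/\mu)P$ via \cref{cor_linfty_bound}. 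The factor $2/\mu$ can be absorbed into the constant $P$ (which is defined in \cref{cor_linfty_bound} only as an upper bound, so may be enlarged once and for all), producing exactly $P\|p_A-p_B\|_{L^q(\Omega)}$; chaining with \cref{thm_stability} delivers the full chain of inequalities.

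There is no serious obstacle here: the whole argument reduces to the envelope inequality plus one use of coercivity. The only minor care needed is to justify treating $u^*(p_A(\cdot))$ as an $L^q$ function, but this is immediate from the continuity of $u^*=-\nabla g^*(-\cdot)$ and the pointwise bound $|u^*(p)|\le (2/\mu)|p|$.
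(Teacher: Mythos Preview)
Your proof is correct and follows essentially the same route as the paper: establish the pointwise envelope inequality $|\bar H(p_1)-\bar H(p_2)| \le |p_1-p_2|\max\bigl(|u^*(p_1)|,|u^*(p_2)|\bigr)$, then combine H\"older's inequality with the $L^q$ bounds of \cref{cor_linfty_bound}. Your explicit derivation of $|u^*(p)|\le (2/\mu)|p|$ from \eqref{eq_g_coercive} just spells out what the paper's terse appeal to \cref{cor_linfty_bound} leaves implicit (and your absorption of the resulting constant into $P$ is harmless, since $P$ is only ever used as a generic upper bound).
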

\begin{proof}
Let $p_1,p_2\in \mathbb R$ be given. Let $u_i = \argmin_{v\in [u_a,u_b]}  H(p_i,v)$ for $i=1,2$.
Then we get by the properties of $\bar H$
\[
 \bar H(p_1) \le H(p_1, u_2) = (p_1-p_2)u_2 + H(p_2,u_2) = (p_1-p_2)u_2 + \bar H(p_2).
\]
This implies
 \[
  \bar H(p_2) \le -(p_1-p_2)u_1 + \bar H(p_1)
 \]
by exchanging $(p_1,u_1$) and $(p_2,u_2)$ in the above estimate.
Summarizing, we obtain
\[
 |\bar H(p_1) - \bar H(p_2)| \le |p_1-p_2| \max (|u_1|, |u_2|).
\]
Using \cref{cor_linfty_bound} yields the claim.
\end{proof}

We will proceed with the following expansion of the value function. Note that in the non-strongly convex case
the claim is valid with $\mu=0$.

\begin{lemma}\label{lem_expJ}
Let $A,B\subset \Omega$, and let $(u_A,y_A,p_A)$ and $(u_B,y_B,p_B)$ be solutions of \eqref{eq_p_a}
and \PB.
Then it holds
\[
J(A,u_A) - J(B,u_B) + \frac\mu2 \|u_B-u_A\|_{L^2(A\cap B)}^2 + \frac12\|y_B-y_A\|_Y^2
\le \int_\Omega (\chi_A - \chi_B) (\beta + \bar H(p_A) ) \dx,
\]
where $\mu\ge0$ as in \ref{ass1_strongc}.
 \end{lemma}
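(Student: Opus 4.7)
The plan is to subtract the claimed right-hand side from the identity of \cref{lem_diff_J_final} and reduce the inequality to a pointwise comparison of $H(\cdot,p_A)$ with its infimum $\bar H(p_A)$ on the set $B$. First, observe that $\chi_A p_A(u_A - u_B) + (\chi_A - \chi_B)p_A u_B = \chi_A p_A u_A - \chi_B p_A u_B$, and that the $\beta$-terms on the two sides cancel. The claim is therefore equivalent to
\[
\int_\Omega \bigl[g(u_A) - g(u_B) + \chi_A p_A u_A - \chi_B p_A u_B\bigr]\dx + \frac\mu2 \|u_B - u_A\|_{L^2(A\cap B)}^2 \le \int_\Omega (\chi_A - \chi_B)\bar H(p_A)\dx.
\]

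Next, I would exploit that $\chi_A u_A = u_A$ and $\chi_B u_B = u_B$ by \eqref{eq_optcon_utimeschi} together with $g(0)=0$ from \ref{ass1_g} to restrict the left-hand integrand to $A$ and $B$, thus recognizing it as $\int_A H(u_A, p_A)\dx - \int_B H(u_B, p_A)\dx$. Invoking the Pontryagin principle \eqref{eq_optcon_pmp}, which identifies $H(u_A(x),p_A(x)) = \bar H(p_A(x))$ for a.a.\ $x \in A$, and writing the right-hand side as $\int_A \bar H(p_A)\dx - \int_B \bar H(p_A)\dx$, the claim collapses to
\[
\frac\mu2 \|u_B - u_A\|_{L^2(A\cap B)}^2 \le \int_B \bigl[H(u_B, p_A) - \bar H(p_A)\bigr]\dx.
\]

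Finally, I would split $B = (A\cap B) \cup (B\setminus A)$. On $B\setminus A$ the integrand is non-negative simply because $\bar H(p_A) = \inf_{v\in\R} H(v, p_A)$. On $A\cap B$, using again $\bar H(p_A) = H(u_A, p_A)$ and the fact that $-p_A(x) \in \partial g(u_A(x))$ a.e.\ on $A$ by \eqref{eq_optcon_pw}, the midpoint strong-convexity inequality in \ref{ass1_strongc} (dividing by $\lambda$ and letting $\lambda \to 0^+$) yields the pointwise bound $g(u_B) - g(u_A) \ge -p_A(u_B - u_A) + \tfrac\mu2(u_B-u_A)^2$, and therefore $H(u_B,p_A) - H(u_A,p_A) \ge \tfrac\mu2(u_B - u_A)^2$. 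Integrating over $A\cap B$ closes the estimate. The only nontrivial step is the bookkeeping that splits the $\Omega$-integrals cleanly into $A$ and $B$ pieces using $\chi_A u_A = u_A$ and $g(0)=0$; the remaining inequalities are immediate consequences of the definitions of $\bar H$ and $\partial g$.
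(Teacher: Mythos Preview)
Your proof is correct and follows essentially the same route as the paper's: both start from \cref{lem_diff_J_final}, use $\chi_A u_A=u_A$, $\chi_B u_B=u_B$, $g(0)=0$ to localize to $A\cup B$, invoke \eqref{eq_optcon_pmp} to identify $H(u_A,p_A)=\bar H(p_A)$ on $A$, and apply strong convexity together with \eqref{eq_optcon_pw} on $A\cap B$ plus the trivial bound $H\ge\bar H$ on $B\setminus A$. The only difference is organizational: the paper splits the integral into the three pieces $A\cap B$, $A\setminus B$, $B\setminus A$ from the outset and tracks them in parallel, whereas you first collapse the integrand to $\int_A H(u_A,p_A)-\int_B H(u_B,p_A)$ and only split $B$ at the end, which makes the bookkeeping a little cleaner.
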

\begin{proof}
From \cref{lem_diff_J_final} we get
\begin{multline}\label{eq3601}
J(A,u_A) - J(B,u_B)  +  \frac12\|y_B-y_A\|_Y^2 \\
=\int_\Omega  g(u_A)-g(u_B) + \chi_Ap_A( u_A - u_B) + (\chi_A-\chi_B)(\beta+ p_Au_B )\dx.
\end{multline}
We will now split the integral on the right-hand side into integrals on $A\cap B$, $A\setminus B$, and $B\setminus A$.
This is sufficient as the integrand vanishes outside of $A\cup B$.
For the integral on $A\cap B$, we can use the optimality condition \eqref{eq_optcon_pw} and the (possibly strong) convexity of $g$ to obtain
\[
 \int_{A\cap B}  g(u_A)-g(u_B) + \chi_Ap_A( u_A - u_B) \dx \le -\frac\mu2 \|u_B-u_A\|_{L^2(A\cap B)}^2.
\]
Moreover, $u_B$ vanishes on $A\setminus B$, while $u_A$ vanishes on $B\setminus A$. This allows to simplify
\begin{multline}\label{eq3603}
 \int_\Omega  g(u_A)-g(u_B) + \chi_Ap_A( u_A - u_B) + (\chi_A-\chi_B) p_Au_B \dx
 \\
 \le  -\frac\mu2 \|u_B-u_A\|_{L^2(A\cap B)}^2
 + \int_{A\setminus B}  g(u_A) + p_A u_A \dx
-\int_{B\setminus A}  g(u_B) + p_Au_B\dx
\end{multline}
Using $H$ and $\bar H$, we can write
\begin{multline}\label{eq3604}
 \int_{A\setminus B}  g(u_A) + p_A u_A \dx
-\int_{B\setminus A}  g(u_B) + p_Au_B\dx
\\
=
 \int_{A\setminus B}  \bar H(p_A) \dx
-\int_{B\setminus A}  H(u_B,p_A)\dx
\le
 \int_{A\setminus B}  \bar H(p_A) \dx
-\int_{B\setminus A}  \bar H(p_A)\dx.
\end{multline}
Applying    \eqref{eq3603}  and \eqref{eq3604}, in \eqref{eq3601},
 results in the upper bound
\[
J(A,u_A) - J(B,u_B) + \frac\mu2 \|u_B-u_A\|_{L^2(A\cap B)}^2 + \frac12\|y_B-y_A\|_Y^2
\le
\int_\Omega (\chi_A - \chi_B) (\beta + \bar H(p_A) ) \dx,
\]
which is the claim.
\end{proof}

The next result
gives an expansion of the value function $J(A)$ together with an remainder term that is of
higher order in $\|\chi_A-\chi_B\|_{L^1(\Omega)}$.

\begin{theorem}\label{thm_exp_valuef}
Assume \ref{ass1_Omega}, \ref{ass1_y}, \ref{ass1_g}, \ref{ass1_strongc}, \ref{ass1_smooth}, \ref{ass1_beta}.
Let $A,B\subset \Omega$, and let $(u_A,y_A,p_A)$ and $(u_B,y_B,p_B)$ be solutions of \eqref{eq_p_a}
and \PB.
Then it holds
\begin{multline*}
\left|J(A,u_A) - J(B,u_B) - \int_\Omega (\chi_A - \chi_B) (\beta + \bar H(p_B)) \dx \right| \\
+ \frac\mu2 \|u_B-u_A\|_{L^2(A\cap B)}^2 + \frac12\|y_B-y_A\|_Y^2
\le PK\|\chi_A-\chi_B\|_{L^1(\Omega)}^{3(\frac12-\frac1q)},
\end{multline*}
where $P$, $K$, $q$ are from \cref{cor_linfty_bound}, \cref{thm_stability}, and \ref{ass1_smooth}, respectively.
 \end{theorem}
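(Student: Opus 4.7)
The plan is to apply Lemma \ref{lem_expJ} twice---once as stated and once with the roles of $A$ and $B$ swapped---in order to sandwich $J(A,u_A) - J(B,u_B)$ between two integral expressions, and then to control the gap using Lemma \ref{lem_barH_lipschitz} together with H\"older's inequality.

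First I would introduce the shorthand
\[
R := \tfrac\mu2 \|u_B - u_A\|_{L^2(A\cap B)}^2 + \tfrac12 \|y_B - y_A\|_Y^2,
\quad
I_A := \int_\Omega (\chi_A - \chi_B)(\beta + \bar H(p_A))\dx,
\]
and similarly $I_B$. Lemma \ref{lem_expJ} directly gives $J(A,u_A) - J(B,u_B) + R \le I_A$. Interchanging the roles of $A$ and $B$ in that lemma (noting that both $R$ and the factor $\chi_A - \chi_B$ change sign in the expected way, and that the quadratic term is symmetric in $(u_A,u_B)$ and $(y_A,y_B)$) yields
\[
J(B,u_B) - J(A,u_A) + R \le -I_B,
\]
which rearranges to $I_B + R \le J(A,u_A) - J(B,u_B)$. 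Combining the two bounds produces the two-sided sandwich
\[
I_B + R \;\le\; J(A,u_A) - J(B,u_B) \;\le\; I_A - R.
\]
Since $R\ge 0$, the quantity $\Delta := J(A,u_A) - J(B,u_B) - I_B$ is nonnegative, and therefore $|\Delta| + R = \Delta + R \le I_A - I_B$.

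It remains to estimate
\[
I_A - I_B = \int_\Omega (\chi_A - \chi_B)\bigl(\bar H(p_A) - \bar H(p_B)\bigr)\dx.
\]
I would apply H\"older's inequality with the exponents $s$ and $q/2$ satisfying $\tfrac1s + \tfrac2q = 1$, which is admissible because $q > 6 > 2$ gives $s = q/(q-2) \in (1,\infty)$. Using the identity $\|\chi_A - \chi_B\|_{L^s(\Omega)} = \|\chi_A - \chi_B\|_{L^1(\Omega)}^{1/s}$ recalled just before Theorem \ref{thm_stability}, I then invoke Lemma \ref{lem_barH_lipschitz} to bound $\|\bar H(p_A) - \bar H(p_B)\|_{L^{q/2}(\Omega)} \le PK\|\chi_A - \chi_B\|_{L^1(\Omega)}^{1/2 - 1/q}$. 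The two exponents combine as
\[
\tfrac{1}{s} + \bigl(\tfrac12 - \tfrac1q\bigr) = \bigl(1 - \tfrac2q\bigr) + \bigl(\tfrac12 - \tfrac1q\bigr) = 3\bigl(\tfrac12 - \tfrac1q\bigr),
\]
delivering exactly the claimed right-hand side $PK\|\chi_A - \chi_B\|_{L^1(\Omega)}^{3(1/2 - 1/q)}$.

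The only step requiring real care is the sandwich argument: one must be sure that applying Lemma \ref{lem_expJ} after swapping $A$ and $B$ really produces a \emph{lower} bound on $J(A,u_A) - J(B,u_B)$ in terms of $I_B$ (not $I_A$), and that the quadratic residual $R$ is unchanged by the swap. Once those symmetries are correctly tracked, the remaining work is the routine H\"older computation together with the two already-established lemmas, and the exponent bookkeeping checks out because of the specific $L^{q/2}$-Lipschitz constant provided by Lemma \ref{lem_barH_lipschitz}.
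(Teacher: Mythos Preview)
Your proposal is correct and follows essentially the same strategy as the paper's proof: both apply \cref{lem_expJ} once in each direction to obtain a two-sided estimate and then control the gap $I_A - I_B = \int_\Omega (\chi_A-\chi_B)(\bar H(p_A)-\bar H(p_B))\dx$ via H\"older and \cref{lem_barH_lipschitz}. The only difference is purely organizational---the paper inserts $\bar H(p_B)$ into the upper bound first and estimates the correction term before invoking the swapped inequality, whereas you first record the sandwich $I_B + R \le J(A,u_A)-J(B,u_B) \le I_A - R$ and then bound $I_A - I_B$; the ingredients and the exponent arithmetic are identical.
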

\begin{proof}
Using the result of \cref{lem_expJ}, we get
\begin{multline*}
J(A,u_A) - J(B,u_B) + \frac\mu2 \|u_B-u_A\|_{L^2(A\cap B)}^2 + \frac12\|y_B-y_A\|_Y^2 \\
\le \int_\Omega (\chi_A - \chi_B) (\beta + \bar H(p_B) - \bar H(p_B) + \bar H(p_A) ) \dx.
\end{multline*}
Using \cref{lem_barH_lipschitz} and \cref{thm_stability}, we can estimate the integral involving $\bar H(p_A)- \bar H(p_B)$ as
\begin{multline}\label{eq3701}
 \int_\Omega (\chi_A - \chi_B) (\bar H(p_A)- \bar H(p_B)) \dx \\
 \le \|\chi_A-\chi_B\|_{L^1(\Omega)}^{1-\frac2q}  \cdot P \|p_A-p_B\|_{L^q(\Omega)} \le  PK \|\chi_A-\chi_B\|_{L^1(\Omega)}^{3( \frac12-\frac1q) }.
 \end{multline}
This results in the upper bound
\begin{multline*}
J(A,u_A) - J(B,u_B) + \frac\mu2 \|u_B-u_A\|_{L^2(A\cap B)}^2 + \frac12\|y_B-y_A\|_Y^2 \\
\le \int_\Omega (\chi_A - \chi_B) (\beta + \bar H(p_B)) \dx + PK\|\chi_A-\chi_B\|_{L^1(\Omega)}^{3(\frac12-\frac1q)}.
\end{multline*}
To obtain a lower bound, we use the result of  \cref{lem_expJ} but with the roles of $A$ and $B$ reversed (and multiplying the resulting inequality by $-1$), which yields
\[
J(A,u_A) - J(B,u_B) - \frac\mu2 \|u_B-u_A\|_{L^2(A\cap B)}^2 - \frac12\|y_B-y_A\|_Y^2 \\
\ge  \int_\Omega (\chi_A - \chi_B) (\beta + \bar H(p_B) ) \dx.
\]
Both inequalities together prove the claim.
\end{proof}

As a by-product of the previous proof,
we get the strengthened stability estimate
\[
  \|u_B-u_A\|_{L^2(A\cap B)} + \|y_B-y_A\|_Y
\le K' \|\chi_A-\chi_B\|_{L^1(\Omega)}^{\frac32(\frac12-\frac1q)},
\]
which improves the exponent from \cref{thm_stability} by a factor $\frac32$.

\begin{remark}
 If $S^* \in \L(Y,L^q(\Omega))$ then the estimate can improved to
\[
  \|u_B-u_A\|_{L^2(A\cap B)} + \|y_B-y_A\|_Y
\le K' \|\chi_A-\chi_B\|_{L^1(\Omega)}^{2(\frac12-\frac1q)}
\]
by estimating $\|p_A-p_B\|_{L^q(\Omega)}$ against $\|y_A-y_B\|_Y$ in the estimate \eqref{eq3701}.
\end{remark}

\section{Topological derivatives}

\begin{definition}
Let $B \subset \Omega$. Then the topological derivative of $J$ at $B$ at the point $x$ is defined by
\[
   DJ(B)(x) = \begin{cases}
				\displaystyle
               \lim_{r\searrow0} \frac{  J(B \cup B_r(x)) - J(B)}{|B_r|} & \text{ if } x\not\in B\\
               \\
				\displaystyle
               \lim_{r\searrow0} \frac{  J(B \setminus B_r(x)) - J(B)}{|B_r|} & \text{ if } x \in B.
              \end{cases}
\]
\end{definition}

The existence of the topological derivative is now a consequence of the expansion in \cref{thm_exp_valuef} and the Lebesgue differentiation theorem.

\begin{theorem}\label{thm_top_der}
Assume \ref{ass1_Omega}, \ref{ass1_y}, \ref{ass1_g}, \ref{ass1_strongc}, \ref{ass1_smooth}, \ref{ass1_beta}.
Let $B\subset \Omega$, and let $(u_B,y_B,p_B)$ be a solution of \PB.

Then for almost all $x \in \Omega$ the topological derivative $ DJ(B)(x)$ exists, and is given by
 \[
  DJ(B)(x) = \sigma(B,x)(\beta(x) + \bar H(p_B(x)))
 \]
with
\[
 \sigma(B,x) := 
 \begin{cases}
	+1 & \text{ if } x\not\in B\\
	-1 & \text{ if } x    \in B.
\end{cases}
\]
\end{theorem}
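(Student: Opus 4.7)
The plan is to apply the expansion in \cref{thm_exp_valuef} with $A = B \cup B_r(x)$ or $A = B \setminus B_r(x)$, divide by $|B_r|$, and pass to the limit $r \searrow 0$ using the Lebesgue differentiation theorem. The crucial numerical fact that makes the remainder negligible is the exponent $3(\tfrac12-\tfrac1q)$, which exceeds $1$ precisely because $q > 6$ by \ref{ass1_smooth}.

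More concretely, consider the case $x \notin B$ and set $A_r := B \cup B_r(x)$. Then $\chi_{A_r} - \chi_B$ is the indicator of $B_r(x) \setminus B$, so $\|\chi_{A_r}-\chi_B\|_{L^1(\Omega)} = |B_r(x)\setminus B| \le |B_r|$. Writing $\varphi := \beta + \bar H(p_B)$, \cref{thm_exp_valuef} yields
\[
J(A_r) - J(B) = \int_{B_r(x)\setminus B} \varphi \, dy + R_r, \qquad |R_r| \le PK\,|B_r|^{3(\tfrac12-\tfrac1q)}.
\]
Dividing by $|B_r|$, the remainder behaves like $|B_r|^{3(\tfrac12-\tfrac1q)-1} \to 0$ since $q > 6$. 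For the main term I would first check that $\varphi \in L^1(\Omega)$: by \ref{ass1_beta} we have $\beta \in L^1$, and the Lipschitz estimate in \cref{lem_barH_lipschitz} combined with $\bar H(0) = -g^*(0) = -\sup(-g) = \min g = 0$ (using $g\ge0$ and $g(0)=0$ from \ref{ass1_g}) gives $|\bar H(p_B)| \le C|p_B|$, so $\bar H(p_B) \in L^q(\Omega) \subset L^1(\Omega)$ by \cref{cor_linfty_bound} and $|\Omega|<\infty$.

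Next I would invoke Lebesgue's differentiation theorem: at almost every $x \in \Omega$,
\[
\frac{1}{|B_r|}\int_{B_r(x)} \varphi\, dy \longrightarrow \varphi(x).
\]
To replace $B_r(x)$ by $B_r(x)\setminus B$, I use the Lebesgue density theorem applied to the set $B$: at almost every $x \notin B$ the set $B$ has density $0$, so $|B_r(x) \cap B|/|B_r| \to 0$, which together with boundedness of $\varphi$ in $L^1$ (via a standard cutoff/truncation argument, or directly since $\varphi \in L^1_{\mathrm{loc}}$) gives
\[
\frac{1}{|B_r|}\int_{B_r(x)\setminus B}\varphi\, dy \longrightarrow \varphi(x).
\]
Hence $DJ(B)(x) = \varphi(x) = \sigma(B,x)\varphi(x)$ for a.e.\ $x \notin B$.

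The case $x \in B$ is symmetric: take $A_r := B \setminus B_r(x)$, so $\chi_{A_r}-\chi_B = -\chi_{B\cap B_r(x)}$ with $L^1$-norm at most $|B_r|$, and the same expansion yields
\[
\frac{J(A_r)-J(B)}{|B_r|} = -\frac{1}{|B_r|}\int_{B\cap B_r(x)}\varphi\, dy + o(1),
\]
which converges to $-\varphi(x)$ at a.e.\ $x \in B$ by the density-one property of $B$ at its own points. Combining both cases produces the prefactor $\sigma(B,x)$ as claimed.

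The only genuinely delicate point is matching the higher-order exponent from \cref{thm_exp_valuef} against the volume factor $|B_r|$: the balance $3(\tfrac12-\tfrac1q) > 1 \Leftrightarrow q > 6$ is what makes the expansion refined enough to define a topological derivative, and it explains the particular threshold imposed in \ref{ass1_smooth}. Everything else is a routine application of Lebesgue's theorems, with no need to distinguish the $x\in B$ and $x\notin B$ cases beyond the sign bookkeeping.
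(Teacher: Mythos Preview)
Your proof is correct and follows essentially the same route as the paper: apply the expansion of \cref{thm_exp_valuef}, divide by $|B_r|$, use $q>6$ to kill the remainder, and invoke Lebesgue differentiation. The only (minor) streamlining the paper makes is to apply the Lebesgue differentiation theorem directly to the $L^1$ function $\chi_B\cdot(\beta+\bar H(p_B))$ (respectively $(1-\chi_B)\cdot(\beta+\bar H(p_B))$), which immediately yields $\frac{1}{|B_r|}\int_{B\cap B_r(x)}\varphi\to\chi_B(x)\varphi(x)$ for a.e.\ $x$ and so avoids your separate density-plus-truncation step.
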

\begin{proof}
 Let $x_0 \in B$. Let $r>0$. Define $A(x_0,r):=B \setminus B_r(x_0)$.
 Then it follows $\chi_{A(x_0,r)} - \chi_B = - \chi_{B \cap B_r(x_0)}$, which implies $\|\chi_{A(x_0,r)} - \chi_B\|_{L^1(\Omega)} \le |B_r|$.
 Using this in the result of \cref{thm_exp_valuef}, we find
 \begin{equation}\label{eq4201}
  \left|J(A(x_0,r))-J(B) +\int_{ B \cap B_r(x_0)} \beta + \bar H (p_B) \dx \right| \le  PK |B_r|^{3(\frac12-\frac1q)}.
\end{equation}
Let us now define
\[
 v(x_0,r) := \frac1{|B_r|} \int_{B_r(x_0)} \chi_B \cdot (\beta + \bar H (p_B)) \dx.
\]
By the Lebesgue differentiation theorem, we have
\[
 \lim_{r\searrow0} v(x,r) =  \chi_B(x) \cdot (\beta(x) + \bar H (p_B(x)))
\]
for almost all $x\in \Omega$. This implies together with \eqref{eq4201}
\[
 \lim_{r\searrow0} \frac{J(A(x,r))-J(B)}{|B_r|} =  -(\beta(x) + \bar H (p_B(x)))
\]
for almost all $x\in B$. Here we used that $3(\frac12-\frac1q) >1$ by \ref{ass1_smooth}.
This proves the claim for $x \in B$.

The claim for $x\not\in B$ can be proven completely analogously: this time we set $A(x_0,r):=B \cup B_r(x_0)$ for $x_0\not\in B$, which implies
$\chi_{A(x_0,r)} - \chi_B = \chi_{B_r(x_0) \setminus B}$, resulting in the different sign of the topological derivative.
\end{proof}

Note that in contrast to other works, we do not need  to impose continuity of $u_B$ near $x_0$ as in \cite[Corollary 4.1]{KaliseKunischSturm2018},
nor do we need to argue by H\"older continuity of the adjoint as in \cite[Corollary 3.2]{Amstutz2011}.

We can now formulate a necessary optimality condition for \eqref{eq004} using the topological derivative.

\begin{theorem}\label{thm_opt_con}
Assume \ref{ass1_Omega}, \ref{ass1_y}, \ref{ass1_g}, \ref{ass1_strongc}, \ref{ass1_smooth}, \ref{ass1_beta}.
 Let $B$ be a solution of \eqref{eq004}. Then
 \[
  DJ(B)(x) \ge 0  \quad \text{ for a.a. } x\in \Omega.
  \]
\end{theorem}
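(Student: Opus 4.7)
The plan is to exploit the optimality of $B$ at the level of perturbed sets and then pass to the limit, using that the topological derivative is already known to exist almost everywhere by \cref{thm_top_der}. The argument is essentially immediate: minimality gives a one-sided inequality on the difference quotients defining $DJ(B)(x)$, and their limit exists for a.e.\ $x$.

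More concretely, let $N\subset\Omega$ be the exceptional null set outside of which $DJ(B)(x)$ exists in the sense of the defining limits from \cref{thm_top_der}. Fix $x\in\Omega\setminus N$ and consider the two cases. If $x\notin B$, set $A(x,r):= B\cup B_r(x)$; since $A(x,r)$ is an admissible competitor in \eqref{eq004}, the minimality of $B$ yields
\[
J(A(x,r))-J(B)\ge 0 \qquad \text{for every }r>0.
\]
Dividing by $|B_r|>0$ and letting $r\searrow 0$, the limit exists by the choice of $x$ and is precisely $DJ(B)(x)$, so $DJ(B)(x)\ge 0$. If instead $x\in B$, set $A(x,r):= B\setminus B_r(x)$; the same minimality argument applied to this competitor gives the same conclusion.

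Since the complement of $N$ has full Lebesgue measure in $\Omega$, combining both cases yields $DJ(B)(x)\ge 0$ for almost all $x\in\Omega$. There is no real obstacle here: the substantive work has already been absorbed into \cref{thm_top_der}, which guarantees a.e.\ existence of the topological derivative regardless of continuity properties of $u_B$ or $p_B$. The only point worth flagging is that no further regularity of $B$ (such as a ``regular boundary'') is needed, because the perturbed sets $B\cup B_r(x)$ and $B\setminus B_r(x)$ are automatically measurable admissible competitors in \eqref{eq004}.
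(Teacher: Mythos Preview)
Your proof is correct and is exactly the argument the paper has in mind: the paper's own proof simply states that the result follows immediately from \cref{thm_top_der}, and you have spelled out precisely this immediate consequence by combining minimality of $B$ with the a.e.\ existence of the defining limits.
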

\begin{proof}
 The result follows immediately from \cref{thm_top_der}.
\end{proof}

\begin{remark}
 Using the celebrated Ekeland's variational principle \cite{Ekeland1974}, the following result can be proven for $\epsilon$-solutions: There is an $\epsilon$-solution,
 such that optimality conditions are satisfied up to $\epsilon$. We briefly sketch the proof.

 Let $V$ be the metric space of characteristic functions $\chi_B$, $B\subset \Omega$ measurable, supplied with the $L^1(\Omega)$-metric, which makes it a complete space.
 Applying \cite[Theorem 1.1]{Ekeland1974} with $\epsilon>0$ and $\lambda=1$
 there is $B_\epsilon \subset \Omega$ such that
 \begin{equation}\label{eq4401}
  J(B_\epsilon) \le \inf_{B\subset \Omega} J(B) + \epsilon
 \end{equation}
and
 \begin{equation}\label{eq4402}
 J(A) \ge J(B_\epsilon) - \epsilon\|\chi_A-\chi_{B_\epsilon}\|_{L^1(\Omega)}
\end{equation}
for all $A\subset \Omega$. Owing to \eqref{eq4401} the set $B_\epsilon$ is then an $\epsilon$-solution of \eqref{eq004}.
Due to  inequality \eqref{eq4402}, we can consider variations of $J(B_\epsilon)$ to obtain
estimates of the topological derivative:

For $x_0 \in \Omega$ and $r>0$, define $A(x_0,r)$ as in the proof of \cref{thm_top_der}.
 Then $\frac1{|B_r|} ( J(A(x_0,r)) - J(B_\epsilon)) \ge -\epsilon$ by \eqref{eq4402},
 which results in $DJ(B_\epsilon)(x_0) \ge -\epsilon$ for almost all $x_0$.
This proves the existence of an $\epsilon$-solution that satisfies the optimality condition up to an $\epsilon$.
\end{remark}

In addition, the defect in the optimality condition of \cref{thm_opt_con} can be used to get an error estimate as follows.

\begin{corollary}\label{lem_error_est}
Assume \ref{ass1_Omega}, \ref{ass1_y}, \ref{ass1_g}, \ref{ass1_beta}.
Let $A \subset \Omega$,  let $(u_A,y_A,p_A)$ be a solution of \eqref{eq_p_a}.
Let the defect $\delta_A$ be defined by
\[
 \delta_A:=\int_{A} \left(\beta + \bar H(p_A)\right)^+ \dx  - \int_{\Omega\setminus A} \left(\beta + \bar H(p_A)\right)^-  \dx
 = -\int_\Omega (DJ(B))^- \dx.
\]
Then we have
\[
 J(A) - \inf_{B\subset \Omega} J(B) \le \delta_A,
\]
and $A$ is a $\delta_A$-solution.
If $B$ is a solution of \eqref{eq004} then we have the error estimate
\[
J(A,u_A) - J(B,u_B)  + \frac12\|y_B-y_A\|_Y^2 +\frac\mu2 \|u_B-u_A\|_{L^2(A\cap B)}^2 \le \delta_A.
\]
\end{corollary}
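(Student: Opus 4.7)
The plan is to read off the result directly from the expansion in \cref{lem_expJ}, combined with elementary pointwise bounds using $x^+$ and $x^-$. Note that in the definition of $\delta_A$, the intended integrand is $(DJ(A))^-$ (this identity is the bridge to the topological derivative formula of \cref{thm_top_der}: on $A$ one has $DJ(A) = -(\beta + \bar H(p_A))$, so $-(DJ(A))^- = (\beta+\bar H(p_A))^+$, and on $\Omega\setminus A$ one has $DJ(A)=\beta+\bar H(p_A)$, so $-(DJ(A))^- = -(\beta+\bar H(p_A))^-$; integrating recovers the two-term expression for $\delta_A$).

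The core step: apply \cref{lem_expJ} with $A$ as given and $B$ arbitrary. This yields
\[
J(A,u_A) - J(B,u_B) + \frac\mu2 \|u_B-u_A\|_{L^2(A\cap B)}^2 + \frac12\|y_B-y_A\|_Y^2 \le \int_\Omega (\chi_A - \chi_B) (\beta + \bar H(p_A)) \dx.
\]
I would then split the right-hand integral over the disjoint pieces $A\setminus B$ and $B\setminus A$ (the integrand of $\chi_A-\chi_B$ vanishes elsewhere). On $A\setminus B\subset A$, use $\beta+\bar H(p_A)\le (\beta+\bar H(p_A))^+$ together with nonnegativity of the $^+$-part to extend the domain of integration up to $A$. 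On $B\setminus A\subset \Omega\setminus A$, use $-(\beta+\bar H(p_A)) \le -(\beta+\bar H(p_A))^-$ (valid since $x\ge x^-$ under the paper's convention $x^-=\min(x,0)$) together with nonnegativity of $-(\beta+\bar H(p_A))^-$ to extend up to $\Omega\setminus A$. Summing gives exactly $\delta_A$.

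With the key inequality
\[
J(A,u_A) - J(B,u_B) + \frac\mu2 \|u_B-u_A\|_{L^2(A\cap B)}^2 + \frac12\|y_B-y_A\|_Y^2 \le \delta_A
\]
in hand, everything follows by bookkeeping. Dropping the two nonnegative terms on the left and taking the infimum over all admissible $B$ yields $J(A) - \inf_{B\subset\Omega} J(B) \le \delta_A$, which is precisely the assertion that $A$ is a $\delta_A$-solution. The final error estimate is obtained by keeping all terms and specializing $B$ to be any solution of \eqref{eq004}.

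There is essentially no obstacle here: the substantive analytic work has already been done in \cref{lem_expJ} and \cref{thm_top_der}. The only care needed is notational — verifying that the two equivalent descriptions of $\delta_A$ (via $(\beta+\bar H(p_A))^\pm$ and via $(DJ(A))^-$) really do match under the sign convention of the paper, and making sure the pointwise bounds on $A\setminus B$ and $B\setminus A$ are applied in the correct direction so that the upper bound on the right-hand side indeed collapses to $\delta_A$ rather than something involving $B$.
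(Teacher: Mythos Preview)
Your proof is correct and follows essentially the same route as the paper: apply \cref{lem_expJ}, split the right-hand integral over $A\setminus B$ and $B\setminus A$, bound each piece pointwise by the corresponding $^+$ or $^-$ part, enlarge the integration domains to $A$ and $\Omega\setminus A$, and conclude. Your additional remark verifying the two equivalent expressions for $\delta_A$ (and catching that the integrand should be $(DJ(A))^-$) is a useful clarification that the paper leaves implicit.
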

\begin{proof}
Let $B\subset \Omega$ and $(u_B,y_B,p_B)$ be a solution of \PB.
By \cref{lem_expJ}, we have
\begin{multline*}
J(A,u_A) - J(B,u_B) + \frac\mu2 \|u_B-u_A\|_{L^2(A\cap B)}^2 + \frac12\|y_B-y_A\|_Y^2 \\
\begin{aligned}
& \le \int_\Omega (\chi_A - \chi_B) (\beta + \bar H(p_A) ) \dx, \\
&=  \int_{A\setminus B} \beta + \bar H(p_A) \dx  - \int_{B\setminus A} \beta + \bar H(p_A)  \dx \\
&\le  \int_{A} \left(\beta + \bar H(p_A)\right)^+ \dx  - \int_{\Omega\setminus A} \left(\beta + \bar H(p_A)\right)^-  \dx
= \delta_A.
\end{aligned}
\end{multline*}
If $B$ is a solution of \eqref{eq004} then the claim follows. Otherwise, we take the supremum of $- J(B,u_B)$ on the left-hand side.
\end{proof}

\section{The non-strongly convex case} \label{sec_nonstronglyconvex}

Let us briefly comment on the non-strongly convex case. That is, we no longer assume the strong convexity of $g$ as in \ref{ass1_strongc}.
We will replace \ref{ass1_strongc} and \ref{ass1_smooth} by the following two assumptions.

\begin{enumerate}[label=\bfseries (A\arabic*')]
\setcounter{enumi}{3}
 \item\label{ass2_g}
 $\dom g$ is a bounded subset of $\R$,
 \item\label{ass2_Sstar}
	There is $q > 3$ such that $S^* \in \L(Y, L^q(\Omega))$, where $S^*\in \L(Y,L^2(\Omega))$ denotes the Hilbert space-adjoint of $S$.
\end{enumerate}

\ref{ass2_g} implies the solvability of \eqref{eq_p_a}. In addition, solutions $u_A$ of \eqref{eq_p_a} will be in $L^\infty(\Omega)$.
Due to the missing strong convexity, we have to replace the assumption on $S^*S$ in \ref{ass1_smooth} by an assumption on $S^*$.
The $L^\infty(\Omega)$-regularity of optimal controls will allow us to work with a smaller exponent $q$ in \ref{ass2_Sstar} when compared to  \ref{ass1_smooth}.
Condition \ref{ass2_Sstar} is fulfilled for \cref{ex_elliptic,ex_parabolic}.

Note that we do not add assumptions that imply unique solvability of \eqref{eq_p_a}.

\begin{proposition}
 Let $A\subset \Omega$ be given.  Then there is a minimizer $u_A$ of \eqref{eq_p_a}.
 Moreover, $\chi_Au_A $ is also a minimizer of \eqref{eq_p_a}.
\end{proposition}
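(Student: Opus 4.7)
The plan is to adapt the proof of \cref{prop_existence}, with \ref{ass2_g} supplying coercivity in place of the strong-convexity bound \eqref{eq_g_coercive}. First I would take a minimizing sequence $(u_n) \subset L^2(\Omega)$ for $J(\cdot,A)$. Since $J(0,A) < \infty$ by \ref{ass1_beta} and $g \ge 0$, the infimum is finite and $J(u_n,A) < \infty$ eventually, which forces $u_n(x) \in \dom g$ for almost every $x$. By \ref{ass2_g} the set $\dom g$ is bounded in $\R$, hence $\|u_n\|_{L^\infty(\Omega)} \le C$ uniformly in $n$; together with $|\Omega|<\infty$ from \ref{ass1_Omega} this gives a uniform $L^2$-bound, and along a subsequence $u_n \rightharpoonup u_A$ weakly in $L^2(\Omega)$.

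The main step is weak lower semi-continuity of $J(\cdot,A)$. The tracking term $u \mapsto \frac12\|S(\chi_A u) - y_d\|_Y^2$ is convex and continuous from $L^2(\Omega)$ to $\R$ by \ref{ass1_y}, hence weakly lsc. The integral functional $u \mapsto \int_\Omega g(u)\dx$ is convex by \ref{ass1_g}; it is strongly lsc on $L^2(\Omega)$ by lsc of $g$ together with Fatou's lemma applied to a pointwise a.e.\@ convergent subsequence, and convex plus strongly lsc implies weakly lsc. The remaining term in $J$ does not depend on $u$. Passing to the liminf gives $J(u_A,A) \le \liminf_n J(u_n,A) = \inf_{u\in L^2(\Omega)} J(u,A)$, so $u_A$ is a minimizer.

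For the second claim I would simply invoke \eqref{eq_J_chi}, which yields $J(\chi_A u_A, A) \le J(u_A,A)$; since $u_A$ is a minimizer the reverse inequality is trivial, so $\chi_A u_A$ is a minimizer as well. I do not expect a serious obstacle: without strong convexity we lose uniqueness, and correspondingly the statement no longer asserts the pointwise identity $\chi_A u_A = u_A$ as in \eqref{eq_optcon_utimeschi}; but existence goes through by the standard direct method once \ref{ass2_g} supplies the uniform $L^\infty$-bound on minimizing sequences, and the identity-to-inequality weakening in the second claim is exactly what \eqref{eq_J_chi} provides.
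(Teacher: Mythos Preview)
Your argument is correct and follows essentially the same route as the paper: the paper's proof simply notes that \ref{ass2_g} bounds minimizing sequences in $L^\infty(\Omega)$, defers the remainder of the existence proof to that of \cref{prop_existence}, and invokes \eqref{eq_J_chi} for the second claim. You have just spelled out the details that the paper leaves implicit.
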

\begin{proof}
 Due to \ref{ass2_g} minimizing sequences of $u\mapsto J(A,u)$ are bounded in $L^\infty(\Omega)$.
 Then the proof of existence follows as in \cref{prop_existence}.
 The last claim is a consequence of \eqref{eq_J_chi}.
\end{proof}

In the sequel, we will assume that a solution $u_A$ of \eqref{eq_p_a} satisfies $\chi_Au_A=u_A$.
Due to the previous result, this is not restriction at all, as for every minimizer $u_A$ also $\chi_Au_A$ is a minimizer.
Let us start with a replacement of \cref{lem_bound_uy} and \cref{cor_linfty_bound}.

\begin{lemma}\label{lem_bounds_nsc}
 There is $M>0$ and $P'>0$  such that
 \[
  \|y_A - y_d\|_Y  \le M
 \]
 and
 \[
  \|u_A\|_{L^\infty(\Omega)}\le P' , \quad \|p_A\|_{L^q(\Omega)} \le P'
 \]
for all $A\subset \Omega$ and all solutions $(u_A,y_A,p_A)$ of \eqref{eq_p_a}.
Here, $q$ is as in \ref{ass2_Sstar}.
\end{lemma}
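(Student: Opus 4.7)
The plan is to mirror the strategy of \cref{lem_bound_uy} and \cref{cor_linfty_bound}, adapted to the weaker convexity assumption, establishing the three bounds in turn.

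First, I would bound $\|y_A - y_d\|_Y$. Testing the optimality of $u_A$ against the admissible choice $u=0$ gives
\[
\tfrac12 \|y_A - y_d\|_Y^2 + \int_\Omega g(u_A) \dx + \int_\Omega \chi_A \beta \dx = J(u_A, A) \le J(0, A) = \tfrac12\|y_d\|_Y^2 + \int_\Omega \chi_A \beta \dx.
\]
Using $g \ge 0$ from \ref{ass1_g} on the left and cancelling the $\beta$-integrals directly yields $\|y_A - y_d\|_Y \le \|y_d\|_Y =: M$.

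Next, I would bound $\|u_A\|_{L^\infty(\Omega)}$. Since $J(u_A, A) < \infty$, it follows that $g(u_A(x)) < \infty$ for almost every $x \in \Omega$, i.e., $u_A(x) \in \dom g$ almost everywhere on $\{u_A \ne 0\} \subset A$. Combined with the convention $\chi_A u_A = u_A$, which makes $u_A$ vanish on $\Omega\setminus A$ (and $0 \in \dom g$ by \ref{ass1_g}), we conclude that $u_A(x) \in \dom g$ for almost every $x \in \Omega$. Assumption \ref{ass2_g} then gives $|u_A(x)| \le \sup_{v\in \dom g} |v| < \infty$, providing an $L^\infty(\Omega)$-bound independent of $A$.

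Finally, for $\|p_A\|_{L^q(\Omega)}$, I would simply apply the adjoint representation $p_A = S^*(y_A - y_d)$ from \eqref{eq_def_p}, combined with \ref{ass2_Sstar}, to obtain
\[
\|p_A\|_{L^q(\Omega)} \le \|S^*\|_{\L(Y, L^q(\Omega))} \, \|y_A - y_d\|_Y \le \|S^*\|_{\L(Y, L^q(\Omega))} \, M.
\]
Taking $P'$ to be the maximum of the $L^\infty$-bound on $u_A$ and this $L^q$-bound on $p_A$ completes the proof. No real obstacle is expected here: the key conceptual shift from the strongly convex case is that the $L^\infty$-bound on $u_A$ comes directly from the boundedness of $\dom g$ rather than from a coercivity estimate like \eqref{eq_g_coercive}, and the weaker mapping property on $S^*$ (rather than on $S^*S$) in \ref{ass2_Sstar} suffices precisely because $u_A$ now lies in $L^\infty$ rather than merely $L^2$.
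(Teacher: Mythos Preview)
Your proposal is correct and follows exactly the approach indicated in the paper, which simply states that the bound on $y_A$ is obtained as in \cref{lem_bound_uy} and that the bounds on $u_A$ and $p_A$ are consequences of \ref{ass2_g} and \ref{ass2_Sstar}. You have fleshed out precisely these three steps with the right justifications.
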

\begin{proof}
 The bound of $y_A$ can be obtained as in \cref{lem_bound_uy}, the  bounds of $u_A$ and $p_A$
 are consequences of \ref{ass2_g} and \ref{ass2_Sstar}.
\end{proof}

Due to the missing strong convexity of $g$, we cannot expect stability of controls as in \cref{thm_stability}.
Here, we have the following replacement.

\begin{theorem}\label{thm_stability_nsc}
Assume \ref{ass1_Omega}, \ref{ass1_y}, \ref{ass1_g}, \ref{ass2_g}, \ref{ass2_Sstar}.
Then there is a constant $K'>0$ such that for
all $A,B\subset \Omega$
\[
  \|p_A-p_B\|_{L^q(\Omega)} + \|y_B-y_A\|_Y\le K' \|\chi_A-\chi_B\|_{L^1(\Omega)}^{\frac12(1-\frac1q)},
\]
where $(u_A,y_A,p_A)$ and $(u_B,y_B,p_B)$ are solutions of \eqref{eq_p_a} and \PB, and $q$ is from  \ref{ass2_Sstar}.
\end{theorem}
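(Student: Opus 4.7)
The plan is to mimic the proof of \cref{thm_stability}, but substitute the $L^\infty$-bound on controls coming from \ref{ass2_g} for the $L^2$-control estimate that in the strongly convex case was obtained from the term $\mu\|u_A-u_B\|_{L^2}^2$ in \cref{lem_diff_uy}.

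First, I would observe that \cref{lem_diff_uy} was proved allowing $\mu = 0$, so in the present setting it yields
\[
 \|y_B-y_A\|_Y^2 \le \int_\Omega (\chi_A-\chi_B)(p_A u_B - p_B u_A)\dx.
\]
I would then apply H\"older's inequality with exponents $s,q,\infty$ satisfying $\frac{1}{s} = 1-\frac{1}{q}$ (note this is different from the triple used in \cref{thm_stability}: there $u_A,u_B$ were only in $L^2$, while here they lie in $L^\infty$ with a uniform bound by \cref{lem_bounds_nsc}). Using the identity $\|\chi_A - \chi_B\|_{L^s(\Omega)} = \|\chi_A-\chi_B\|_{L^1(\Omega)}^{1-1/q}$ together with the $L^q$-bounds on $p_A, p_B$ and $L^\infty$-bounds on $u_A,u_B$ from \cref{lem_bounds_nsc}, this produces
\[
 \|y_B-y_A\|_Y^2 \le 2(P')^2 \|\chi_A-\chi_B\|_{L^1(\Omega)}^{1-\frac{1}{q}}.
\]
Taking the square root gives the required $L^1$-exponent $\frac{1}{2}(1-\frac{1}{q})$ for $\|y_B-y_A\|_Y$.

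Next I would transfer this estimate to the adjoints. By \ref{ass2_Sstar}, $S^* \in \L(Y, L^q(\Omega))$, so
\[
 \|p_A - p_B\|_{L^q(\Omega)} = \|S^*(y_A - y_B)\|_{L^q(\Omega)} \le \|S^*\|_{\L(Y,L^q(\Omega))} \|y_A-y_B\|_Y,
\]
and both terms therefore satisfy the same bound. Choosing $K'$ to absorb the constants $P'$ and $\|S^*\|_{\L(Y,L^q(\Omega))}$ concludes the argument.

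There is no real obstacle here; the only nontrivial choice is the H\"older triple, and the key structural difference from \cref{thm_stability} is that we do not obtain any stability of $u_A - u_B$ (which is consistent with the possible nonuniqueness of the minimizer noted before the statement), but only of the states and adjoints. Note also that this is why we need $q > 3$ rather than $q > 6$: we gain an exponent $\frac{1}{2}$ from the $L^\infty$-bound on $u$, rather than $\frac{1}{q}$ from its $L^q$-bound, so the condition $3(\frac{1}{2}-\frac{1}{q}) > 1$ used in \cref{thm_top_der} is replaced by $1+\frac{1}{2}(1-\frac{1}{q}) > 1$, which is guaranteed as long as $q$ is finite, while $q > 3$ suffices for the higher-order remainder estimate of the value function to remain meaningful.
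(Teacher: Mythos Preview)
Your proposal is correct and follows essentially the same approach as the paper: both use \cref{lem_diff_uy} with $\mu=0$, apply H\"older with the $L^\infty$-bound on controls and the $L^q$-bound on adjoints from \cref{lem_bounds_nsc}, and invoke $S^*\in\L(Y,L^q(\Omega))$ from \ref{ass2_Sstar} to pass from states to adjoints. The only cosmetic difference is that the paper absorbs the adjoint estimate into the left-hand side via a constant $\mu'$ before applying H\"older, while you first bound $\|y_A-y_B\|_Y$ and then derive the adjoint bound; the ingredients and resulting constants are the same.
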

\begin{proof}
From \ref{ass2_Sstar}, we get
$
\|p_A-p_B\|_{L^q(\Omega)} \le \|S^*\|_{\L(Y, L^q(\Omega))} \|y_A-y_B\|_{L^2(\Omega)}
$.
Define $\mu':=1 /  \|S^*\|_{\L(Y, L^q(\Omega))}^2$.
Let $q'$ be such that $\frac1{q'}+\frac1q=1$.
From the inequality of \cref{lem_diff_uy}, we obtain with H\"older's inequality
\[\begin{aligned}
\frac{\mu'}2 \|p_A-p_B\|_{L^q(\Omega)}^2 + \frac12\|y_B-y_A\|_Y^2
& \le \int_\Omega  (\chi_A-\chi_B)(p_Au_B - p_Bu_A)\dx\\
& \le 2(P')^2 \|\chi_A-\chi_B\|_{L^1(\Omega)}^{1-\frac1q} ,
\end{aligned}
\]
where $P'$ is from \cref{lem_bounds_nsc},
and the claim is proven.
\end{proof}

This stability result has to replace \cref{thm_stability} in the proof of \cref{thm_exp_valuef}.
The result corresponding to the latter theorem now reads as follows.
Note that due to \ref{ass2_g}, $\bar H(p)$ is well-defined and finite for all $p\in \R$.

\begin{theorem}\label{thm_exp_valuef_nsc}
Assume \ref{ass1_Omega}, \ref{ass1_y}, \ref{ass1_g}, \ref{ass2_g}, \ref{ass2_Sstar}, \ref{ass1_beta}.
Let $A,B\subset \Omega$, and let $(u_A,y_A,p_A)$ and $(u_B,y_B,p_B)$ be solutions of \eqref{eq_p_a}
and \PB.
Then it holds
\begin{multline*}
\left|J(A,u_A) - J(B,u_B) - \int_\Omega (\chi_A - \chi_B) (\beta + \bar H(p_B)) \dx \right| 
+  \frac12\|y_B-y_A\|_Y^2
\le P'K'\|\chi_A-\chi_B\|_{L^1(\Omega)}^{\frac32(1-\frac1q)},
\end{multline*}
where $P'$, $K'$, $q$ are from \cref{lem_bounds_nsc} and \ref{ass2_Sstar}, respectively.
\end{theorem}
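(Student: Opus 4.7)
The plan is to mimic the proof of Theorem~\ref{thm_exp_valuef}, replacing the strongly-convex ingredients by their non-strongly-convex counterparts and keeping track of the altered exponents. The starting point remains Lemma~\ref{lem_expJ}, which, as noted there, is already valid with $\mu=0$. Applied to the pair $(A,B)$, it gives
\[
J(A,u_A)-J(B,u_B)+\tfrac12\|y_B-y_A\|_Y^2
\le \int_\Omega (\chi_A-\chi_B)\bigl(\beta+\bar H(p_B)+(\bar H(p_A)-\bar H(p_B))\bigr)\dx,
\]
so everything reduces to controlling the extra term involving $\bar H(p_A)-\bar H(p_B)$.

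Next I would re-examine the Lipschitz estimate of Lemma~\ref{lem_barH_lipschitz}. Its pointwise derivation $|\bar H(p_1)-\bar H(p_2)|\le |p_1-p_2|\max(|u_1|,|u_2|)$ is purely convex-analytic and does not use strong convexity; the only place where the strong-convexity constant entered was the uniform bound on the minimizing $u_i$, which is now furnished by the $L^\infty$-bound $\|u_A\|_{L^\infty(\Omega)}\le P'$ from Lemma~\ref{lem_bounds_nsc}. Hence one gets the pointwise estimate $|\bar H(p_A(x))-\bar H(p_B(x))|\le P'|p_A(x)-p_B(x)|$, and in particular
\[
\|\bar H(p_A)-\bar H(p_B)\|_{L^q(\Omega)}\le P'\|p_A-p_B\|_{L^q(\Omega)}.
\]

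Now I would apply H\"older's inequality with the conjugate pair $(q,q')$ given by $\frac1q+\frac1{q'}=1$, using the characteristic-function identity $\|\chi_A-\chi_B\|_{L^{q'}(\Omega)}=\|\chi_A-\chi_B\|_{L^1(\Omega)}^{1-\frac1q}$. Combining this with the Lipschitz estimate above and the new stability result Theorem~\ref{thm_stability_nsc} (which contributes the exponent $\frac12(1-\frac1q)$ in place of the strongly-convex $\frac12-\frac1q$) yields
\[
\int_\Omega (\chi_A-\chi_B)(\bar H(p_A)-\bar H(p_B))\dx
\le \|\chi_A-\chi_B\|_{L^1(\Omega)}^{1-\frac1q}\cdot P'K'\|\chi_A-\chi_B\|_{L^1(\Omega)}^{\frac12(1-\frac1q)}
= P'K'\|\chi_A-\chi_B\|_{L^1(\Omega)}^{\frac32(1-\frac1q)},
\]
which is exactly the right-hand side of the claim. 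This produces the desired upper bound; the matching lower bound is obtained by exchanging the roles of $A$ and $B$ in Lemma~\ref{lem_expJ} and multiplying by $-1$, exactly as in the proof of Theorem~\ref{thm_exp_valuef}.

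The only real obstacle is bookkeeping: one has to notice that, although the strong-convexity term $\frac\mu2\|u_B-u_A\|_{L^2(A\cap B)}^2$ no longer appears on the left and although Theorem~\ref{thm_stability_nsc} gives no $L^2$-control on $u_A-u_B$, the Lipschitz estimate for $\bar H$ still goes through because the relevant bound is now an $L^\infty$-bound on the optimal controls coming from the boundedness of $\dom g$ in assumption~\ref{ass2_g}. Once that is observed, the H\"older computation automatically produces the exponent $\frac32(1-\frac1q)$, and since \ref{ass2_Sstar} requires $q>3$ this exponent exceeds $1$, so the remainder is genuinely of higher order, as needed for the topological-derivative application that follows.
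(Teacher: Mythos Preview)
Your proposal is correct and follows essentially the same route as the paper: start from Lemma~\ref{lem_expJ} with $\mu=0$, replace the Lipschitz bound on $\bar H$ by the $L^q$-version $\|\bar H(p_A)-\bar H(p_B)\|_{L^q(\Omega)}\le P'\|p_A-p_B\|_{L^q(\Omega)}$ using the $L^\infty$-control bound from Lemma~\ref{lem_bounds_nsc}, apply H\"older with the pair $(q,q')$ together with Theorem~\ref{thm_stability_nsc}, and finish the lower bound by swapping $A$ and $B$. Your bookkeeping remarks about why the $L^\infty$-bound on controls substitutes for the strong-convexity bound are exactly the point the paper is making.
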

\begin{proof}
 We can proceed exactly as in the proof of \cref{thm_exp_valuef} but now with $\mu=0$. Only the estimate \eqref{eq3701} has to be modified.
The estimate of \cref{lem_barH_lipschitz} has to be changed to
\begin{equation}\label{eq_est_barH_nsc}
  \| \bar H(p_A) -  \bar H(p_B)\|_{L^q(\Omega)} \le P'\|p_A-p_B\|_{L^q(\Omega)}
\end{equation}
using the $L^\infty(\Omega)$-bound of optimal controls in \cref{lem_bounds_nsc}, as well as the estimate of $\bar H$ from the proof of \cref{lem_barH_lipschitz}.
Then the error term of \eqref{eq3701} can be estimated using \cref{eq_est_barH_nsc} and \cref{thm_stability_nsc} as
\[
 \int_\Omega (\chi_A - \chi_B) (\bar H(p_A)- \bar H(p_B)) \dx
  \le \|\chi_A-\chi_B\|_{L^1(\Omega)}^{1-\frac1q}  \cdot P' \|p_A-p_B\|_{L^q(\Omega)} \le  P'K' \|\chi_A-\chi_B\|_{L^1(\Omega)}^{\frac32(1-\frac1q)}.
 \]
The claimed estimate can now be obtained with the same arguments as in the proof of \cref{thm_exp_valuef}.
\end{proof}

\begin{theorem}\label{thm_top_der_nsc}
Assume \ref{ass1_Omega}, \ref{ass1_y}, \ref{ass1_g}, \ref{ass2_g}, \ref{ass2_Sstar}, \ref{ass1_beta}.
Let $B\subset \Omega$. 

Then for almost all $x \in \Omega$ the topological derivative $ DJ(B)(x)$ exists, and it is given by
the expression in \cref{thm_top_der}.
\end{theorem}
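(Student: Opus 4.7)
The plan is to mimic the proof of \cref{thm_top_der} step by step, substituting the non-strongly convex expansion \cref{thm_exp_valuef_nsc} for \cref{thm_exp_valuef} and verifying that the exponent on the remainder term is still strictly larger than one, so that the remainder is of order $o(|B_r|)$ as $r\searrow 0$.

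Concretely, I would fix $B\subset \Omega$ together with a solution $(u_B,y_B,p_B)$ of \PB. For $x_0\in B$ and small $r>0$ I set $A(x_0,r):=B\setminus B_r(x_0)$ and obtain $\|\chi_{A(x_0,r)}-\chi_B\|_{L^1(\Omega)}\le |B_r|$. Applying \cref{thm_exp_valuef_nsc} (with the roles of $A$ and $B$ as in that statement and dropping the nonnegative term $\tfrac12\|y_{A(x_0,r)}-y_B\|_Y^2$) yields an inequality of the form
\[
\left|J(A(x_0,r))-J(B)+\int_{B\cap B_r(x_0)}\bigl(\beta+\bar H(p_B)\bigr)\dx\right|\le P'K'|B_r|^{\frac32(1-\frac1q)}.
\]
Dividing by $|B_r|$ and using the Lebesgue differentiation theorem on the function $x\mapsto \chi_B(x)(\beta(x)+\bar H(p_B(x)))$, which lies in $L^1(\Omega)$ thanks to \ref{ass1_beta} together with the $L^\infty$ bound on $u_B$ coming from \ref{ass2_g} (and hence on $\bar H(p_B)$ via the Lipschitz estimate used in \cref{lem_barH_lipschitz}), gives the claimed limit at a.e.\@ $x_0\in B$. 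The case $x_0\notin B$ is treated by setting $A(x_0,r):=B\cup B_r(x_0)$, which flips the sign of $\chi_{A(x_0,r)}-\chi_B$ and produces the opposite sign of the topological derivative, exactly as in the proof of \cref{thm_top_der}.

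The one genuinely new point, and the step that deserves explicit verification, is that the remainder exponent $\tfrac32(1-\tfrac1q)$ strictly exceeds $1$ under \ref{ass2_Sstar}. Since \ref{ass2_Sstar} assumes $q>3$, we have $1-\tfrac1q>\tfrac23$ and therefore $\tfrac32(1-\tfrac1q)>1$, so $|B_r|^{\frac32(1-\frac1q)}/|B_r|\to 0$ as $r\searrow 0$. This is precisely the role played by the condition $q>6$ in \ref{ass1_smooth}, and it is the reason the proof of \cref{thm_top_der} transfers without further modification.

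I do not expect a real obstacle here; the argument is essentially a bookkeeping exercise confirming that every ingredient in the proof of \cref{thm_top_der} has an analogue established earlier in \cref{sec_nonstronglyconvex} (namely \cref{lem_bounds_nsc}, \cref{thm_stability_nsc}, and \cref{thm_exp_valuef_nsc}). The mildest care is needed in ensuring that $\bar H(p_B)\in L^1(\Omega)$ so that Lebesgue differentiation applies; this follows from $\bar H(0)=0$, the Lipschitz bound $|\bar H(p)|\le P'|p|$ obtained as in \cref{lem_barH_lipschitz} using the $L^\infty$ bound on the minimizer $u_B$ from \cref{lem_bounds_nsc}, and $p_B\in L^q(\Omega)\subset L^1(\Omega)$ thanks to $|\Omega|<\infty$.
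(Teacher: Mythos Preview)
Your proposal is correct and follows exactly the approach the paper intends: the paper gives no explicit proof of \cref{thm_top_der_nsc}, leaving it understood that one repeats the argument of \cref{thm_top_der} verbatim with \cref{thm_exp_valuef_nsc} in place of \cref{thm_exp_valuef}, using that $q>3$ in \ref{ass2_Sstar} makes the remainder exponent $\tfrac32(1-\tfrac1q)>1$. Your additional remark ensuring $\bar H(p_B)\in L^1(\Omega)$ for the Lebesgue differentiation step is a welcome clarification that the paper omits.
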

%
%

\section{Optimization method based on the topological derivative}
\label{sec_alg}

In this section, we introduce an optimization algorithm that is motivated by the work on the topological derivative.
Here, we work under the set of assumptions of \cref{thm_top_der} or \cref{thm_top_der_nsc}.

Let $A_k\subset \Omega$ be a given iterate together with solutions of \PP{A_k}. 
Let us define the residual in the optimality condition of \cref{thm_opt_con} as
\begin{equation}\label{eq_def_rhok}
 \rho_k := (DJ(A_k))^- .
\end{equation}
Let us denote the support of $\rho_k$ by
\[
 R_k :=  \{x: \ \rho_k(x) \ne 0\} .
\]
New iterates $A_{k+1}$ will now be defined by adding/removing points to/from $A_k$, where  $\rho_k$ is non-zero.
That is, we will choose $D_{k,t} \subset R_k$ and denote the candidate for a new iterate by $A_k \triangle D_{k,t}$.
Given $t\in (0,1]$, we select $D_{k,t}$ with the properties:
\begin{equation}\label{eq_def_dkt}
\begin{aligned}
 D_{k,t} &\subset R_k:  &
 \|\rho_k\|_{L^1(D_{k,t})} &\ge t  \|\rho_k\|_{L^1(\Omega)} ,\\
 && |D_{k,t}| &\le  t |R_k|.
\end{aligned}
\end{equation}
That is, we are looking for a set $D_{k,t}$ with prescribed bound on its measure that captures a certain part of the mass of the residual $\rho_k$.
Sets satisfying the conditions of \eqref{eq_def_dkt} exist, and can be found by solving
\begin{equation}\label{eq_def_dkt_optim}
 \max_{D\subset R_k: \ |D| \le tR_k} \| \rho_k \|_{L^1(D)}.
\end{equation}

In \cite{CeaGioanMichel1973}, such a problem is used to generate search directions.
A related procedure to compute such sets is given in \cite[Procedure 1, Lemma 9]{HahnLeyfferSager2023}.
Note that we need both conditions on $D_{k,t}$: the condition on $\|\rho_k\|_{L^1(D_{k,t})}$
will give descent of values of the functional $J$, while the condition on $|D_{k,t}|$ will help to control
the error in the expansion of the functional $J$.

\begin{remark}
\label{rem_algorithms}
Let us comment on related algorithms based on topological derivatives.

In the
seminal work \cite{CeaGioanMichel1974} a similar idea was developed.
In our notation, their algorithm reads: find $t>0$ and $D_{k,t} \subset R_k$
such that $ |D_{k,t}| \le t$ and $\|\rho_k \|_{L^1(D)} \ge \frac{ \tilde M }2t^2$,
where $\tilde M$ is larger than the Lipschitz constant of the Fr\'echet derivative of $J$, when considered as a function from $L^1(\Omega)$ to $\R$.
This choice of $D_{k,t}$ guarantees a sufficient decrease of $J(A_k)$.
However, the knowledge of this Lipschitz constant is necessary to implement this condition.
In addition, this Lipschitz condition is not satisfied for our problem, it would
imply that the remainder term in \cref{thm_exp_valuef} is of order $\|\chi_A-\chi_B\|_{L^1(\Omega)}^2$,
while our analysis works under a weaker estimate of the remainder.

Another popular algorithm is the choice
$D_{k,t} = \{x: \ |\rho_k(x)| \ge t\}$, where $t\in (0, \|\rho_k\|_{L^\infty(\Omega)})$,
see, e.g.,  \cite{CeaGarreauGuillaumeMasmoudi2000,GarreauGuillaumeMasmoudi2001,HassineJanMasmoudi2007,HintermullerLaurain2010},
where $t$ is chosen such that the new iterate respects a volume constraint or a descent condition.
This approach is successfully used in practice.
From the viewpoint of optimization, it has the following theoretical drawback: if $\rho_k$ is a constant function, then $D_{k,t}$ is either empty or equal to $R_k$,
and a linesearch in $t$ is not guaranteed to succeed.

An algorithm based on trust-region ideas can be found in \cite{HahnLeyfferSager2023,MannsHahnKirchesLeyfferSager2023}.
In these works binary control problems are considered. The method proposed there can also be
applied to our problema, and would lead to similar convergence results.

In \cite{AmstutzA2006} a simplified level-set method was introduced, where the level-set function
is updated using the topological derivative.
The corresponding linesearch method was analyzed in \cite{Amstutz2011}.
There, small values of the linesearch parameter may only lead to boundary variations.
\end{remark}

Let us first prove that \eqref{eq_def_dkt_optim} can be used to find sets satisfying the condition \eqref{eq_def_dkt} for fixed $t\in(0,1]$.

\begin{lemma}\label{lem_exist_sigmak}
Let $t\in (0,1]$.
There exist  sets $D_{k,t}$ satisfying conditions \eqref{eq_def_dkt}.
Problem \eqref{eq_def_dkt_optim} is solvable, and every solution $D_{k,t}$ of
 \eqref{eq_def_dkt_optim} satisfies \eqref{eq_def_dkt}.
\end{lemma}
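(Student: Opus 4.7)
The plan is to reduce \eqref{eq_def_dkt_optim} to a bathtub-type maximization. Since $\rho_k$ vanishes outside $R_k$, we have $\|\rho_k\|_{L^1(\Omega)} = \|\rho_k\|_{L^1(R_k)}$ and the objective is just $\int_D |\rho_k|\,dx$. Set $\tau := t|R_k|$ and let $\mu(s) := |\{x \in R_k : |\rho_k(x)| > s\}|$ denote the distribution function, which is non-increasing and right-continuous with $\mu(0)=|R_k|$ and $\lim_{s\to\infty}\mu(s)=0$. Choose the threshold $s^* := \inf\{s \ge 0 : \mu(s) \le \tau\}$, so that $\mu(s^*) \le \tau \le |\{|\rho_k|\ge s^*\}|$. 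I would then construct a candidate $D^* \subset R_k$ with $\{|\rho_k|>s^*\} \subset D^* \subset \{|\rho_k|\ge s^*\}$ and $|D^*|=\tau$; this is possible by slicing off a subset of the level set $\{|\rho_k|=s^*\}$ of the required measure, invoking non-atomicity of Lebesgue measure.

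A standard bathtub-principle comparison then shows $\int_{D^*}|\rho_k|\,dx \ge \int_D |\rho_k|\,dx$ for every admissible $D$: any reallocation from $D^*$ to its complement in $R_k$ exchanges mass at levels $\ge s^*$ for mass at levels $\le s^*$, so cannot improve the objective. This proves solvability of \eqref{eq_def_dkt_optim} with explicit maximizer $D^*$. Writing $f^*$ for the decreasing rearrangement of $|\rho_k|\chi_{R_k}$, the maximal value equals $\int_0^\tau f^*(s)\,ds$. Since $f^*$ is non-increasing on $[0,|R_k|]$, its mean over $[0,\tau]$ dominates its mean over $[0,|R_k|]$, so
\[
\int_{D^*}|\rho_k|\,dx = \int_0^\tau f^*(s)\,ds \ge \frac{\tau}{|R_k|}\int_0^{|R_k|} f^*(s)\,ds = t\,\|\rho_k\|_{L^1(\Omega)}.
\]
Any maximizer $D_{k,t}$ of \eqref{eq_def_dkt_optim} therefore satisfies $\|\rho_k\|_{L^1(D_{k,t})} \ge t\|\rho_k\|_{L^1(\Omega)}$, while the remaining two conditions $D_{k,t}\subset R_k$ and $|D_{k,t}|\le t|R_k|$ are built into feasibility; hence $D_{k,t}$ satisfies \eqref{eq_def_dkt}.

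The main obstacle is purely technical: ensuring that a set of measure exactly $\tau$ can be sandwiched between the strict and non-strict super-level sets of $|\rho_k|$ at the threshold $s^*$, which requires non-atomicity of Lebesgue measure on $\{|\rho_k|=s^*\}$. Everything else follows immediately from monotonicity of the decreasing rearrangement, so the proof is short and essentially a packaging of the bathtub principle tailored to the two requirements in \eqref{eq_def_dkt}.
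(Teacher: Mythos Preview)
Your proof is correct and follows the same level-set/bathtub construction as the paper: define a threshold, sandwich the maximizer between the strict and non-strict super-level sets, and fill in the gap using non-atomicity of Lebesgue measure (the paper cites Sierpiński's theorem for this last step, which is the same thing).

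The only genuine difference is in how the mass lower bound $\|\rho_k\|_{L^1(D_{k,t})} \ge t\,\|\rho_k\|_{L^1(\Omega)}$ is established. You compute the maximum value as $\int_0^\tau f^*$ and use the averaging inequality for the decreasing rearrangement. The paper instead invokes the Lyapunov convexity theorem for the vector measure $E \mapsto (|E|,\int_E |\rho_k|\,dx)$ to obtain, for each $t$, a feasible set achieving \emph{equality} in both conditions of \eqref{eq_def_dkt}; since such a set is admissible in \eqref{eq_def_dkt_optim}, any maximizer automatically satisfies the mass bound. Your rearrangement argument is more elementary and entirely self-contained, while the Lyapunov route is shorter to state but imports a heavier result. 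Either way the outcome is the same.
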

\begin{proof}
The existence of sets $D_{k,t}$ satisfying conditions \eqref{eq_def_dkt} with equality is a consequence of the Lyapunov convexity theorem for vector measures \cite[Corollary IX.5]{DiestelUhl1977}.
Consequently, solutions of \eqref{eq_def_dkt_optim} satisfy \eqref{eq_def_dkt}.
Let us proof the solvability of \eqref{eq_def_dkt_optim}.
 Given $s\ge0$ define
 \[
  B_{>s}:=\{ x: \ |\rho_k(x)| > s \}, \ B_{\ge s}:=  \{ x: \  |\rho_k(x)| \ge s \} .
 \]
 Then the monotonically decreasing functions $s\mapsto |B_{>s}|$ and $s\mapsto |B_{\ge s}|$ are  continuous from the right and from the left, respectively.
 In addition, we have $\lim_{s\to+\infty} |B_{>s}| = \lim_{s\to+\infty} |B_{\ge s}|=0$.

 Assume that there is $s\ge0$ and a set $B$ with $ B_{>s} \subset B \subset B_{\ge s}$ and $|B| = tR_k$.
 We will now argue that $B$ is a solution of \eqref{eq_def_dkt_optim}.
 Let us take $D\subset R_k$ with $|D| \le tR_k$.
 Then we get using the definitions of $B$, $B_{>s}$, $B_{\ge s}$
 \[\begin{aligned}
   \int_D |\rho_k| \dx - \int_B |\rho_k| \dx &= \int_{D \setminus B} |\rho_k| \dx - \int_{B \setminus D} |\rho_k| \dx  \\
   &\le s |D \setminus B| - s | B \setminus D| = s ( |D| -|D \cap B|  -| B ) \\
   &= s( |D| - tR_k) - s|D \cap B|\le0,
\end{aligned}\]
and $B_{>s}$ solves \eqref{eq_def_dkt_optim}.

Hence, if there is $s\ge0$ such that $|B_{>s}|=tR_k$ or $|B_{\ge s}|=tR_k$, then $B_{>s}$ or $B_{\ge s}$ is a solution of \eqref{eq_def_dkt_optim}.
If this is not the case, then there is $s>0$ such that $|B_{>s}|< tR_k <|B_{\ge s}|$.
Since the measure space is atom-free, we can choose $C\subset B_{\ge s} \setminus B_{>s}$ with $|C|=tR_k-|B_{>s}|$ by the Sierpinski theorem \cite[Corollary 1.12.10]{Bogachev2007}.
And $B_{>s} \cup C$ is a solution of \eqref{eq_def_dkt_optim} as argued above.
\end{proof}

The symmetric difference of $A_k$ and $D_{k,t}$ is a candidate for the new iterate.
Let us set
\begin{equation}\label{eq_def_akt_symm}
 A_{k,t} := A_k \triangle D_{k,t}.
\end{equation}
%
%
The step-size $t$ in \eqref{eq_def_dkt_optim} will be determined by an Armijo-like line-search
incorporating the  descent condition
\begin{equation} \label{eq_ls_descent}
 J(A_{k,t}) \le J(A_k) + \sigma \int_\Omega |\chi_{A_{k,t}}-\chi_{A_k}| \cdot \rho_k \dx
\end{equation}
where $\sigma\in (0,1)$ is a fixed parameter.

The validity of this approach is enabled by
the following observation, which shows that \eqref{eq_ls_descent} ensures descent of $J$.
\begin{lemma}\label{lem_diffchi_rho}
Let $D_{k,t}$ satisfy \eqref{eq_def_dkt}, let $A_{k,t}$ as in \eqref{eq_def_akt_symm}.
Then it holds
 \[
  \int_\Omega  |\chi_{A_{k,t}}-\chi_{A_k}| \cdot \rho_k \dx  = - \|\rho_k \|_{L^1(D_{k,t})} \le - t \|\rho_k\|_{L^1(\Omega)}.
 \]
\end{lemma}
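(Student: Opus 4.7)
The plan is to observe that the integrand $|\chi_{A_{k,t}} - \chi_{A_k}|$ is simply the characteristic function of the symmetric difference $A_k \triangle A_{k,t}$, and then to use the fact that $A_{k,t} = A_k \triangle D_{k,t}$ implies $A_k \triangle A_{k,t} = D_{k,t}$ (since symmetric difference is involutive, $A_k \triangle (A_k \triangle D_{k,t}) = D_{k,t}$). This rewrites the integral as $\int_{D_{k,t}} \rho_k \dx$.

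Next, I would use the sign convention $\rho_k = (DJ(A_k))^- = \min(DJ(A_k), 0)$ from the Notation section, which means $\rho_k \le 0$ pointwise. Therefore $\int_{D_{k,t}} \rho_k \dx = -\int_{D_{k,t}} |\rho_k| \dx = -\|\rho_k\|_{L^1(D_{k,t})}$, giving the claimed equality.

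Finally, the inequality $-\|\rho_k\|_{L^1(D_{k,t})} \le -t\|\rho_k\|_{L^1(\Omega)}$ follows immediately from the first condition in \eqref{eq_def_dkt}, namely $\|\rho_k\|_{L^1(D_{k,t})} \ge t\|\rho_k\|_{L^1(\Omega)}$, by multiplying by $-1$.

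There is really no obstacle here; the only thing to be careful about is correctly identifying $A_k \triangle A_{k,t} = D_{k,t}$ and tracking the sign of $\rho_k$. The proof will fit in three or four short lines of LaTeX.
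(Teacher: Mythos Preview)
Your proposal is correct and follows exactly the approach the paper intends; the paper's own proof is a single sentence (``This follows directly from the properties of $\rho_k$ and $D_{k,t}$''), and what you have written is precisely the unpacking of that sentence.
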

\begin{proof}
This follows directly from the properties of $\rho_k$ and $D_{k,t}$, see \eqref{eq_def_rhok}.
%
\end{proof}

Let us prove that there are step-sizes $t$ such that the descent condition is satisfied.
Here, the following estimate of $J(A_{k,t}) - J(A_k)$ will prove useful, which is a consequence of the results of the previous sections.

\begin{lemma}\label{lem_diff_J_reduced}
There is $C>0$ and $\nu>0$ such that for all $A_k \subset \Omega$, $t\in (0,1]$, $A_{k,t}$ as in \eqref{eq_def_akt_symm}, it holds
 \[
   J(A_{k,t}) - J(A_k) \le \int_\Omega |\chi_{A_{k,t}}-\chi_{A_k}| \cdot \rho_k \dx +  C \|\chi_{A_{k,t}} - \chi_{A_k}\|_{L^1(\Omega)}^{1+\nu}.
 \]
\end{lemma}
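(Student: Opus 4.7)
The plan is to read off the estimate directly from \cref{thm_exp_valuef} (respectively \cref{thm_exp_valuef_nsc} in the non-strongly convex setting), with the only non-trivial point being the identification of the first-order term with $\int_\Omega |\chi_{A_{k,t}}-\chi_{A_k}|\,\rho_k\dx$.

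First, I would apply \cref{thm_exp_valuef} with $A:=A_{k,t}$ and $B:=A_k$. Dropping the nonnegative terms $\frac{\mu}{2}\|u_{A_k}-u_{A_{k,t}}\|_{L^2(A_{k,t}\cap A_k)}^2$ and $\frac12\|y_{A_{k,t}}-y_{A_k}\|_Y^2$ from the left-hand side gives
\[
J(A_{k,t})-J(A_k)\le \int_\Omega (\chi_{A_{k,t}}-\chi_{A_k})\bigl(\beta+\bar H(p_{A_k})\bigr)\dx + PK\,\|\chi_{A_{k,t}}-\chi_{A_k}\|_{L^1(\Omega)}^{3(\frac12-\frac1q)}.
\]
Because $q>6$ by \ref{ass1_smooth}, the exponent satisfies $3(\tfrac12-\tfrac1q)>1$, so setting $\nu:=3(\tfrac12-\tfrac1q)-1>0$ and $C:=PK$ already gives the desired remainder term.

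It remains to identify the linear term. By \cref{thm_top_der} we have $DJ(A_k)(x)=\sigma(A_k,x)(\beta(x)+\bar H(p_{A_k}(x)))$ with $\sigma(A_k,\cdot)=-1$ on $A_k$ and $+1$ on $\Omega\setminus A_k$. A case distinction on the two disjoint sets $A_k\setminus A_{k,t}$ (where $\chi_{A_{k,t}}-\chi_{A_k}=-1$) and $A_{k,t}\setminus A_k$ (where $\chi_{A_{k,t}}-\chi_{A_k}=+1$) shows that in both cases the sign of $\sigma(A_k,\cdot)$ cancels the sign of $\chi_{A_{k,t}}-\chi_{A_k}$, so that pointwise
\[
(\chi_{A_{k,t}}-\chi_{A_k})(\beta+\bar H(p_{A_k}))=|\chi_{A_{k,t}}-\chi_{A_k}|\cdot DJ(A_k).
\]
Now $|\chi_{A_{k,t}}-\chi_{A_k}|$ is the characteristic function of $D_{k,t}=A_k\triangle A_{k,t}$, and $D_{k,t}\subset R_k=\{DJ(A_k)<0\}$ by construction. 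On $R_k$ we have $DJ(A_k)=DJ(A_k)^-=\rho_k$, hence
\[
\int_\Omega (\chi_{A_{k,t}}-\chi_{A_k})(\beta+\bar H(p_{A_k}))\dx
=\int_{D_{k,t}} DJ(A_k)\dx =\int_\Omega |\chi_{A_{k,t}}-\chi_{A_k}|\,\rho_k\dx.
\]
Inserting this into the inequality above yields the claim.

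The only step that needs care is the sign accounting in the identification of the first-order term; the remainder estimate itself is immediate from \cref{thm_exp_valuef}. In the non-strongly convex case one replaces \cref{thm_exp_valuef} by \cref{thm_exp_valuef_nsc}; the exponent becomes $\frac32(1-\frac1q)$ with $q>3$ by \ref{ass2_Sstar}, which is still strictly larger than $1$, so $\nu:=\frac12-\frac{3}{2q}>0$ works. No new ideas are required beyond what is already contained in \cref{sec3,sec_nonstronglyconvex}.
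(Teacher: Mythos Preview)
Your proof is correct and follows essentially the same route as the paper: apply \cref{thm_exp_valuef} (resp.\ \cref{thm_exp_valuef_nsc}) with $A=A_{k,t}$, $B=A_k$, then rewrite the linear term via the identity $(\chi_{A_{k,t}}-\chi_{A_k})(\beta+\bar H(p_{A_k}))=|\chi_{A_{k,t}}-\chi_{A_k}|\,DJ(A_k)$. You are in fact more explicit than the paper on the last step, noting that $D_{k,t}\subset R_k$ forces $DJ(A_k)=\rho_k$ on the support of $|\chi_{A_{k,t}}-\chi_{A_k}|$; the paper leaves this implicit.
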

\begin{proof}
As a consequence of the expansion of $J$ in  \cref{thm_exp_valuef,thm_exp_valuef_nsc} and of the definition of $DJ$ in \cref{thm_top_der}, we get
\[
 J(A_{k,t}) - J(A_k) \le \int_\Omega (\chi_{A_{k,t}}-\chi_{A_k})\sigma(A_k) DJ(A_k)\dx +  C \|\chi_{A_{k,t}} - \chi_{A_k}\|_{L^1(\Omega)}^{1+\nu},
\]
where $\sigma(A_k)$ is as in \cref{thm_top_der}.
Note that $|\chi_{A_{k,t}}-\chi_{A_k}| = (\chi_{A_{k,t}}-\chi_{A_k}) \sigma(A_k)$, which proves the claim.
\end{proof}


Now, we are in the position to prove the existence of step-sizes such that the descent condition \eqref{eq_ls_descent} is satisfied.

\begin{lemma}\label{lem_exist_tk}
Let $\sigma\in (0,1)$.
Assume $\rho_k \ne 0$.
Then there is $\tilde t_k >0$ such that for all $t\in (0,\tilde t_k]$ and all sets $D_{k,t}$ satisfying  \eqref{eq_def_dkt}
the set $A_{k,t}:=A_k \triangle D_{k,t}$ satisfies the descent condition \eqref{eq_ls_descent}.
\end{lemma}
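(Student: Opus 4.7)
The plan is to combine the bound from \cref{lem_diff_J_reduced} with the identity from \cref{lem_diffchi_rho}, then exploit the two prescribed properties of $D_{k,t}$ in \eqref{eq_def_dkt} to absorb the remainder into the linear descent term. Concretely, for any $D_{k,t}$ satisfying \eqref{eq_def_dkt} and $A_{k,t}:=A_k \triangle D_{k,t}$, I would first note that $\|\chi_{A_{k,t}}-\chi_{A_k}\|_{L^1(\Omega)} = |D_{k,t}| \le t|R_k|$, so that \cref{lem_diff_J_reduced} yields
\[
  J(A_{k,t})-J(A_k) \le \int_\Omega |\chi_{A_{k,t}}-\chi_{A_k}| \cdot \rho_k \dx + C (t|R_k|)^{1+\nu}.
\]

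Next I would use \cref{lem_diffchi_rho} to rewrite the integral as $-\|\rho_k\|_{L^1(D_{k,t})}$, and then split this quantity into a part $-\sigma \|\rho_k\|_{L^1(D_{k,t})}$ that will sit on the right-hand side of \eqref{eq_ls_descent} and a part $-(1-\sigma) \|\rho_k\|_{L^1(D_{k,t})}$ that must dominate the remainder. Using the lower bound $\|\rho_k\|_{L^1(D_{k,t})} \ge t \|\rho_k\|_{L^1(\Omega)}$ from \eqref{eq_def_dkt}, the descent condition \eqref{eq_ls_descent} is established once
\[
  C(t|R_k|)^{1+\nu} \le (1-\sigma)\, t\,\|\rho_k\|_{L^1(\Omega)},
\]
which, after dividing by $t>0$, reduces to the scalar inequality $C t^\nu |R_k|^{1+\nu} \le (1-\sigma)\|\rho_k\|_{L^1(\Omega)}$.

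Since $\rho_k \ne 0$ by assumption, both $\|\rho_k\|_{L^1(\Omega)}>0$ and $|R_k|>0$, so the threshold
\[
  \tilde t_k := \min\!\left\{1,\; \left(\frac{(1-\sigma)\|\rho_k\|_{L^1(\Omega)}}{C|R_k|^{1+\nu}}\right)^{1/\nu}\right\}
\]
is strictly positive and enforces the required inequality for every $t\in(0,\tilde t_k]$. I would close the proof by observing that the choice of $\tilde t_k$ depends only on $\rho_k$ and $|R_k|$, and not on the particular set $D_{k,t}$, so the conclusion holds uniformly over all admissible $D_{k,t}$. I do not expect a substantive obstacle here: the content of the lemma is essentially that the higher-order exponent $1+\nu$ in \cref{lem_diff_J_reduced} together with the two calibrating conditions in \eqref{eq_def_dkt} suffice for a standard Armijo-style argument, and the only point requiring a small amount of care is keeping the two different scalings (mass $\|\rho_k\|_{L^1(D_{k,t})}$ and measure $|D_{k,t}|$) separate so that the remainder can be absorbed.
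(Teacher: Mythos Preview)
Your proposal is correct and follows essentially the same approach as the paper: combine \cref{lem_diff_J_reduced} with \cref{lem_diffchi_rho}, use $|D_{k,t}|\le t|R_k|$ to bound the remainder and $\|\rho_k\|_{L^1(D_{k,t})}\ge t\|\rho_k\|_{L^1(\Omega)}$ to control the linear term, arriving at the same scalar inequality $(1-\sigma)\|\rho_k\|_{L^1(\Omega)}\,t \ge C|R_k|^{1+\nu}t^{1+\nu}$. The only cosmetic difference is that you give an explicit formula for $\tilde t_k$, whereas the paper simply observes that the right-hand side of \eqref{eq_lem64} is negative for $t$ small enough.
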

\begin{proof}
Due to \eqref{eq_def_dkt}, we have
\[
\|\chi_{A_{k,t}} - \chi_{A_k}\|_{L^1(\Omega)} = |A_{k,t} \triangle A_k| = |D_{k,t}| \le t |R_k|.
\]
%
With the results of \cref{lem_diffchi_rho,lem_diff_J_reduced}, we get
 \begin{multline} \label{eq_lem64}
  J(A_{k,t}) - J(A_k) - \sigma \int_\Omega |\chi_{A_{k,t}}-\chi_{A_k}| \cdot \rho_k \dx\\
  \begin{aligned}
  &\le  (1-\sigma) \int_\Omega |\chi_{A_{k,t}}-\chi_{A_k}| \cdot \rho_k \dx +  C \|\chi_{A_{k,t}} - \chi_{A_k}\|_{L^1(\Omega)}^{1+\nu}\\
  & \le -(1-\sigma)  \|\rho_k\|_{L^1(\Omega)} t + C |R_k|^{1+\nu} t^{1+\nu}.
  \end{aligned}
\end{multline}
Clearly, the right-hand side
is negative for $t$ small enough.
\end{proof}

\begin{algorithm}[htbp]
\begin{algorithmic}
 \State Choose $\tau\in (0,1)$, $\sigma\in(0,1)$, $A_0 \subset \Omega$, $\dtol\ge0$. Set $k:=0$.

 \Loop \Comment{Gradient descent}

 \State Compute a solution $(u_k,y_k,p_k)$ of \PP{A_k}.
 \State Compute $\rho_k$ as in \eqref{eq_def_rhok}.
 \If{$\|\rho_k\|_{L^1(\Omega)} \le \dtol$}  \Comment{Termination criterion}
	\State  \textbf{return} $A_k$
 \EndIf

 \State $t:=1$.

 \Loop \Comment{Armijo line-search}
 \State Compute $D_{k,t}$ as a solution of \eqref{eq_def_dkt_optim}.
 \State Compute $J(A_{k,t})$ for $A_{k,t} = A_k \triangle D_{k,t}$.
 \If{$A_{k,t}$ satisfies \eqref{eq_ls_descent} } \State \textbf{break}\EndIf
 \State $t:= \tau \cdot t$.

 \EndLoop
 \State $t_k := t$.  \Comment{Update}
 \State $A_{k+1} := A_k \triangle D_{k,t}$.
\State $k:=k+1$.
\EndLoop
\end{algorithmic}
\caption{Topological gradient descent algorithm}
\label{alg_top}
\end{algorithm}

The resulting algorithm is \cref{alg_top}. Let us comment on it in detail.
The algorithm stops if $\|\rho_k\|_{L^1(\Omega)} \le \dtol$ for some prescribed tolerance $\dtol\ge0$.
This is motivated by \cref{lem_error_est}: if $\|\rho_k\|_{L^1(\Omega)}$ is less than some tolerance $\dtol>0$ then $A_k$ is
a $\dtol$-solution of \eqref{eq004}.
Due to \cref{lem_exist_tk}, the Armijo line-search will terminate in finitely many steps, and the algorithm is well-defined.
If the algorithm does not stop after finitely many iterations, then it will produce an infinite sequence of sets $(A_k)$, such that $(J(A_k))$ is monotonically decreasing.
We have the following basic convergence result.

\begin{lemma}\label{lem_conv_basic}
Let $(A_k)$ be an infinite sequence of iterates of \cref{alg_top}.
Then it holds
\[
  \sum_{k=0}^\infty t_k \|\rho_k\|_{L^1(\Omega)} < +\infty.
\]
\end{lemma}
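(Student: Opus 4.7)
The plan is a standard telescoping argument based on the Armijo-type descent condition. By construction, each accepted iterate $A_{k+1} = A_k \triangle D_{k,t_k}$ satisfies \eqref{eq_ls_descent}, and Lemma \ref{lem_diffchi_rho} translates the integral on the right-hand side into a signed $L^1$-quantity of $\rho_k$. Combining these two facts gives
\[
J(A_{k+1}) - J(A_k) \;\le\; \sigma \int_\Omega |\chi_{A_{k+1}} - \chi_{A_k}|\,\rho_k \dx \;=\; -\sigma\|\rho_k\|_{L^1(D_{k,t_k})} \;\le\; -\sigma\, t_k\,\|\rho_k\|_{L^1(\Omega)},
\]
where the last inequality uses the mass-capturing property of $D_{k,t_k}$ in \eqref{eq_def_dkt}.

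The next step is to sum this descent inequality from $k=0$ to some finite $N$, obtaining
\[
\sigma \sum_{k=0}^N t_k\,\|\rho_k\|_{L^1(\Omega)} \;\le\; J(A_0) - J(A_{N+1}).
\]
To pass to $N\to\infty$, I need a lower bound on $J(A_{N+1})$ that is independent of $N$. For any measurable $A$ and any $u\in L^2(\Omega)$ admissible,
\[
J(u,A) \;\ge\; \int_\Omega \chi_A(x)\,\beta(x)\dx \;\ge\; -\int_\Omega \beta^-(x)\dx,
\]
since $\tfrac12\|S(\chi_A u)-y_d\|_Y^2 \ge 0$ and $g\ge 0$ by \ref{ass1_g}. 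Because $\beta\in L^1(\Omega)$ by \ref{ass1_beta}, the right-hand side is finite, so $J(A) \ge -\|\beta^-\|_{L^1(\Omega)}$ uniformly in $A$.

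Letting $N\to\infty$ then yields
\[
\sigma \sum_{k=0}^\infty t_k\,\|\rho_k\|_{L^1(\Omega)} \;\le\; J(A_0) + \|\beta^-\|_{L^1(\Omega)} \;<\; +\infty,
\]
which is the claim after dividing by $\sigma>0$. There is no real obstacle here: the only subtle point is the uniform lower bound on $J$, which is immediate from nonnegativity of $g$ and integrability of $\beta$, so the rest of the argument is a one-line telescoping.
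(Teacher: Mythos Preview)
Your proof is correct and follows essentially the same approach as the paper: apply the accepted Armijo descent condition \eqref{eq_ls_descent}, use \cref{lem_diffchi_rho} to bound the right-hand side by $-\sigma t_k\|\rho_k\|_{L^1(\Omega)}$, and telescope using a uniform lower bound on $J$. You spell out the lower bound explicitly (via $g\ge0$ and $\beta\in L^1(\Omega)$), whereas the paper simply asserts that $J$ is bounded from below; one minor point is that the paper uses the convention $x^-:=\min(x,0)$, so your intermediate inequality would read $\int_\Omega \chi_A\beta\dx \ge \int_\Omega \beta^-\dx$ rather than $-\int_\Omega \beta^-\dx$, but the final bound $J(A)\ge -\|\beta^-\|_{L^1(\Omega)}$ is correct either way.
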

\begin{proof}
Due to the descent condition \eqref{eq_ls_descent} and the result of \cref{lem_diffchi_rho}, we have the chain of inequalities
 \[
 J(A_{k+1}) - J(A_k) \le  \sigma \int_\Omega |\chi_{A_{k+1}} - \chi_{A_k}| \cdot \rho_k \dx
 \le - \sigma  t_k \|\rho_k\|_{L^1(\Omega)} < 0.
 \]
Since $J$ is bounded from below, we can sum the above inequalities for $k=0, \dots$, which proves the claim.
\end{proof}

\begin{theorem}\label{thm_convergence}
 Let $(A_k)$ be the iterates of \cref{alg_top}. Then exactly one of the following statements is true:
 \begin{enumerate}
  \item \label{exit1}  The algorithm returns after $m\in \N$ iterations with a $\dtol$-solution $A_m$ of  \eqref{eq004}.
  \item \label{exit2} The algorithm produces  an infinite sequence $(A_k)$
  with $\lim_{k\to\infty} \|\rho_k\|_{L^1(\Omega)} = 0$,
 and $(A_k)$ is a minimizing sequence of \eqref{eq004}.
\end{enumerate}
In particular, if $\dtol>0$ then the algorithm terminates after finitely many iterations.
\end{theorem}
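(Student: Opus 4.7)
The two alternatives are clearly mutually exclusive (the algorithm either terminates or it doesn't), and the statement about finite termination when $\dtol>0$ will follow from the convergence in case (ii). The plan is to handle case (i) directly via the error estimate of \cref{lem_error_est}, then focus on case (ii), where the core work is to upgrade the summability $\sum t_k\|\rho_k\|_{L^1(\Omega)}<\infty$ from \cref{lem_conv_basic} into $\|\rho_k\|_{L^1(\Omega)}\to 0$.

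For case (i): if the algorithm returns at iteration $m$, the termination criterion gives $\|\rho_m\|_{L^1(\Omega)}\le\dtol$, and since $\delta_{A_m}=-\int_\Omega (DJ(A_m))^-\dx = \|\rho_m\|_{L^1(\Omega)}$, \cref{lem_error_est} yields $J(A_m)-\inf_B J(B)\le \dtol$, i.e. $A_m$ is a $\dtol$-solution.

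For case (ii), \cref{lem_conv_basic} gives $t_k\|\rho_k\|_{L^1(\Omega)}\to 0$. The main task is to show that the step-sizes $t_k$ cannot shrink so fast as to mask a failure of $\|\rho_k\|_{L^1(\Omega)}\to 0$. To this end I would split according to the outcome of the Armijo loop: either $t_k=1$, or $t_k<1$, in which case $t'_k:=t_k/\tau$ violated the descent test \eqref{eq_ls_descent}. In the latter case I plug $t=t'_k$ into the key inequality \eqref{eq_lem64} from the proof of \cref{lem_exist_tk} to obtain
\[
0 < J(A_{k,t'_k})-J(A_k) - \sigma\!\int_\Omega|\chi_{A_{k,t'_k}}-\chi_{A_k}|\rho_k\dx \le -(1-\sigma)\|\rho_k\|_{L^1(\Omega)}t'_k + C|R_k|^{1+\nu}(t'_k)^{1+\nu}.
\]
Solving this inequality for $t'_k$ and using $|R_k|\le|\Omega|<\infty$ yields a lower bound of the form $t_k \ge c\,\|\rho_k\|_{L^1(\Omega)}^{1/\nu}$ with $c>0$ independent of $k$. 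Combining with the case $t_k=1$ gives the uniform lower estimate
\[
t_k \ge \min\bigl(1,\ c\,\|\rho_k\|_{L^1(\Omega)}^{1/\nu}\bigr).
\]

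Multiplying this by $\|\rho_k\|_{L^1(\Omega)}$, the convergence $t_k\|\rho_k\|_{L^1(\Omega)}\to 0$ forces $\min\bigl(\|\rho_k\|_{L^1(\Omega)},\ c\,\|\rho_k\|_{L^1(\Omega)}^{1+1/\nu}\bigr)\to 0$, and hence $\|\rho_k\|_{L^1(\Omega)}\to 0$. Applying \cref{lem_error_est} once more gives $J(A_k)-\inf_B J(B)\le \|\rho_k\|_{L^1(\Omega)}\to 0$, so $(A_k)$ is a minimizing sequence for \eqref{eq004}. Finally, if $\dtol>0$ and the algorithm had produced an infinite sequence, then $\|\rho_k\|_{L^1(\Omega)}\to 0$ would eventually drop below $\dtol$, contradicting non-termination; thus the algorithm must terminate after finitely many iterations.

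The only delicate point is extracting the quantitative lower bound on $t_k$ from the failed Armijo step: it is important that inequality \eqref{eq_lem64} in the proof of \cref{lem_exist_tk} is valid for \emph{every} admissible $D_{k,t}$ (which it is, being derived from \cref{lem_diff_J_reduced} and \cref{lem_diffchi_rho} with any such $D_{k,t}$), so it applies in particular to the set used at the rejected trial step $t_k/\tau$. Everything else is a bookkeeping combination of previously established estimates.
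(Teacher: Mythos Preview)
Your proposal is correct and follows essentially the same approach as the paper: handle case~(i) via \cref{lem_error_est}, and in case~(ii) exploit the failed Armijo step $t_k/\tau$ together with inequality~\eqref{eq_lem64} to obtain a lower bound $t_k\ge c\,\|\rho_k\|_{L^1(\Omega)}^{1/\nu}$ when $t_k<1$, then combine with \cref{lem_conv_basic}. The only cosmetic difference is that the paper keeps the full summability $\sum t_k\|\rho_k\|_{L^1(\Omega)}<\infty$ and deduces $\sum_{t_k=1}\|\rho_k\|_{L^1(\Omega)}+\sum_{t_k<1}\|\rho_k\|_{L^1(\Omega)}^{1+1/\nu}<\infty$, whereas you use only the consequence $t_k\|\rho_k\|_{L^1(\Omega)}\to 0$; both routes yield $\|\rho_k\|_{L^1(\Omega)}\to 0$. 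Your explicit remark that \eqref{eq_lem64} applies to \emph{any} admissible $D_{k,t}$, hence in particular to the set used at the rejected trial step, is a point the paper leaves implicit.
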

\begin{proof}
 Suppose the algorithm returns after $m$ iterations. Then $\|\rho_m\|_{L^1(\Omega)} \le \dtol$, and by \cref{lem_error_est} the set $A_m$ is a $\dtol$-solution $A_m$ of  \eqref{eq004}.

 Now suppose that the algorithm produces  an infinite sequence $(A_k)$  of iterates.
 Let $k$ be such that $t_k < 1$. Due to the line-search procedure of \cref{alg_top},
 it follows that $t:=t_k / \tau\le 1$ violates the descent condition \eqref{eq_ls_descent}, that is
 \[
 0\le   J(A_{k,t}) - J(A_k) - \sigma \int_\Omega |\chi_{A_{k,t}}-\chi_{A_k}| \cdot \rho_k \dx.
 \]
 Using estimate \eqref{eq_lem64}, this implies
 \[
 0 \le -(1-\sigma)  \|\rho_k\|_{L^1(\Omega)} t + C |R_k|^{1+\nu} t^{1+\nu},
 \]
 or equivalently
 \[
  (1-\sigma)  \|\rho_k\|_{L^1(\Omega)} \le C |R_k|^{1+\nu} t^\nu = C |R_k|^{1+\nu} \tau^{-\nu} \ t_k^\nu.
 \]
 Note that $|R_k|\le |\Omega|$.
Together with the result of \cref{lem_conv_basic}, we obtain
\[
 \sum_{k:\,t_k=1}  \|\rho_k\|_{L^1(\Omega)} + \sum_{k:\, t_k<1} \|\rho_k\|_{L^1(\Omega)} ^{1+\frac1\nu} <+\infty,
\]
which results in $\lim_{k\to\infty} \|\rho_k\|_{L^1(\Omega)} = 0$.
Hence, the algorithm stops after finitely many iterations if $\dtol>0$.
\end{proof}

The sequence $(\chi_{A_k})$ of characteristic functions of the iterates admits weak-star converging subsequences in $L^\infty(\Omega) = L^1(\Omega)^*$.
However, the corresponding limits are not guaranteed to be characteristic functions.
If a subsequence converges weak-star to a characteristic function, i.e., $\chi_{A_{k'}} \rightharpoonup^* \chi_A$ in $L^\infty(\Omega)$,
then the convergence is strong in every $L^q(\Omega)$, $q<\infty$, and the limit $A$ is a solution of \eqref{eq004}.
If \eqref{eq004} is unsolvable then weak-star sequential limit points of $(\chi_{A_k})$ cannot be characteristic functions.

A similar convergence result for gradient descent without linesearch can be found in \cite[Theorem 2.1]{CeaGioanMichel1973},
whereas convergence of a trust-region-type algorithm can be found in \cite[Theorem 4.1]{MannsHahnKirchesLeyfferSager2023}.

\begin{remark}
\label{rem_conv_assumptions}
\cref{alg_top} can be generalized to optimization problems of the type $\min_{A\subset \Omega} J(A)$,
where the minimum is taken over measurable subsets $A$.
In order to apply the above analysis, we would need the following ``differentiability'' condition on $J$:
there is $\eta: [0,\infty) \to [0,\infty)$ with $\lim_{t\searrow0} \frac{\eta(t)}t =0$ such that
for each $A\subset \Omega$ there is $DJ(A)\in L^1(\Omega)$ such that
\begin{equation}\label{eq_general_J}
 \left| J(B) - J(A) - \int_\Omega (\chi_B-\chi_A)DJ(A) \dx \right| \le \eta( |B \triangle A| )
 \quad \forall B\subset \Omega.
\end{equation}
Then the result of \cref{lem_exist_tk} is valid, where assumption \eqref{eq_general_J} would act as a substitute for \cref{lem_diff_J_reduced}.
The statement of \cref{thm_convergence} has to be changed to: either $\rho_m=0$ for some finite $m$ or $\lim_{k\to\infty} \|\rho_k\|_{L^1(\Omega)} = 0$.

Note that condition \eqref{eq_general_J} is weaker than the assumptions of \cite{CeaGioanMichel1973,CeaGioanMichel1974} and \cite[Assumption 1.1(a)--(c)]{MannsHahnKirchesLeyfferSager2023}.
In the latter reference that stronger assumption was used to prove a statement analogous to \cref{thm_convergence}.
\end{remark}

\section{Numerical experiments}

\subsection{Optimal control problem with $L^0$-control cost}

Let us report about numerical results of the application of \cref{alg_top}
to the following problem:
Minimize
\[
 \min  \frac12 \|y-y_d\|_{L^2(\Omega)}^2 + \frac\alpha2 \|u\|_{L^2(\Omega)}^2 + \beta \|u\|_0
\]
over all $(y,u) \in H^1_0(\Omega) \times L^2(\Omega)$ satisfying
\[
 -\Delta  y = u \quad \text{ a.e.\@ on }\Omega
\]
and
\[
  u_a \le u \le u_b\quad \text{ a.e.\@ on }\Omega.
\]
This corresponds to the abstract setting with the choices $S:=(-\Delta)^{-1} : L^2(\Omega) \to H^1_0(\Omega) \hookrightarrow L^2(\Omega)$, $Y:=L^2(\Omega)$, $g(u):= \frac\alpha2 u^2 + I_{[u_a,u_b]}(u)$,
$\beta(x):=\beta$.
Here, $I_C$ denotes the indicator function of the convex set $C$, defined by $I_C(x)=0$ for $x\in C$ and $I_C(x)=+\infty$ for $x\not\in C$.
The assumptions are all satisfied. In particular $g$ is strongly convex with modulus $\mu:=\alpha$.
In the numerical experiment, we used $\Omega \subset \R^2$ bounded, so that
assumptions \ref{ass1_smooth} and \ref{ass2_Sstar} are satisfied with $q=\infty$ due to Stampacchia's result \cite{Stampacchia1965}.

\subsubsection{Example from \cite{Wachsmuth2019}}
\label{sec_num11}

We choose $\Omega=(0,1)^2$. We used a standard finite-element discretization on a shape-regular mesh on $\Omega$.
State and adjoint variables (i.e., $y$, $p$) were discretized using continuous piecewise linear functions, while the
control variable was discretized using piecewise constant functions.
Let us remark that for the finest discretization, the control functions have $2,000,000$ degrees of freedom.
The subproblems \eqref{eq_p_a} were solved by a semismooth Newton implementation.
The parameters in the line-search of \cref{alg_top} were chosen to be $\tau=0.5$ and $\sigma=0.1$.
The algorithm was stopped if one of the following conditions was fulfilled: $\|\rho_k\|_{L^\infty(\Omega)}\le 10^{-12}$, the support of $\rho_k$
contained $\le 3$ elements, or the line-search failed to find a valid step-size.
Termination due to the latter condition can happen if the relevant quantities in \eqref{eq_ls_descent},
are very small so that errors in the inexact solve of the sub-problem \eqref{eq_p_a}
are of the same order.

In addition, we used the following data
\[
 y_d(x_1,x_2) = 10 x_1 \sin(5x_1) \cos(7x_2), \ \alpha = 0.01,\  \beta=0.01,\ u_a = -4, \ u_b=+4,
\]
which was also used in \cite{ItoKunisch2014,Wachsmuth2019}.
The computed optimal control, which  is obtained by the last iterate of \cref{alg_top} on the finest mesh,
can be seen in \cref{fig1}. Due to the presence of the $L^0$-term in the objective, the control is zero on a relatively large
part of $\Omega$.

\begin{figure}[htb]
\begin{center}
 \includegraphics[width=0.45\textwidth]{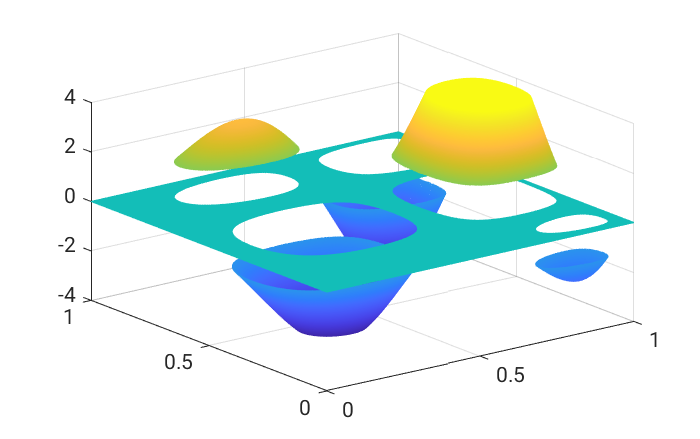}
\end{center}
\caption{Solution $u$ for $h=1.41\cdot 10^{-3}$, \cref{sec_num11} }
\label{fig1}
\end{figure}

The results of the computations for different meshes can be seen in \cref{table1}.
There, $h$ denotes the mesh-size of the triangulation, $J$ denotes the value of the functional $J$ at the final iterate,
similarly $\|\chi\|_{L^1(\Omega)}$ is the size of the support of the optimal control, and $\|\rho\|_{L^1(\Omega)}$ is
 the error estimate from the topological derivative at the final iteration.
The values corresponding to the mesh-size $h=2.83\cdot 10^{-3}$ are in agreement with those from \cite{Wachsmuth2019}.
For this example, all computations stopped due to the support of $\rho_k$ containing less than 3 elements.
In addition, for this example, the step-size $t=1$ was always accepted.
\cref{alg_top} was started with the initial choice $A_0=\Omega$.
As can be seen from \cref{table1}, the optimal values of $J$ and $\|\chi\|_{L^1(\Omega)} $ converge for $h\searrow0$, and $\|\rho\|_{L^1(\Omega)}\to0$ for $h\searrow0$.
According to \cref{thm_convergence}, this strongly suggests that the iterates are a minimizing sequence of \eqref{eq004}.
In the last column of \cref{table1}, we report about the number of iterations until the termination criterion is satisfied.
As can be seen, the numer of iterations rises midly after mesh refinement.


\begin{table}[htb]
\sisetup{table-alignment-mode = format, table-number-alignment = left}
\begin{center}
\begin{tabular}{S[table-format = 1.2e2]S[table-format = 1.3]S[table-format=1.5]S[table-format=1.2e2]S[table-format=1] }
\toprule
$h$ & $J$ & $\|\chi_A\|_{L^1(\Omega)}$ & $\|\rho\|_{L^1(\Omega)}$ & It\\
\midrule
 4.42e-02 & 4.712 & 0.43896 & 4.33e-03 & 2 \\
 2.21e-02 & 5.054 & 0.44299 & 2.12e-08 & 3 \\
 1.13e-02 & 5.216 & 0.44352 & 2.09e-08 & 3 \\
 5.66e-03 & 5.299 & 0.44432 & 2.04e-08 & 3 \\
 2.83e-03 & 5.340 & 0.44455 & 2.11e-11 & 4 \\
 1.41e-03 & 5.360 & 0.44460 & 4.05e-11 & 4 \\
\bottomrule
\end{tabular}
\caption{Results of optimization, \cref{sec_num11}}\label{table1}
\end{center}
\end{table}

Let us report about the influence of the choice of the initial guess $A_0\subset \Omega$.
Here we chose the following set of parameters: $y_d$ was as above, and
\[
 \alpha = 0.001,\  \beta=0.1,\ u_a = -40, \ u_b=+40.
\]
For this example, the method returned the same solution independent of the initial guess.
We depicted the iteration history for different choices of $A_0$ in \cref{fig4}.
In general, the method was faster when starting from $A_0=\Omega$ than from $A_0=\emptyset$.
As one can see from \cref{fig4}, the convergence of $\|\rho_k\|_{L^1(\Omega)}$ is stable with respect to mesh refinement.

\begin{figure}[htb]
\begin{center}
 \includegraphics[width=0.45\textwidth]{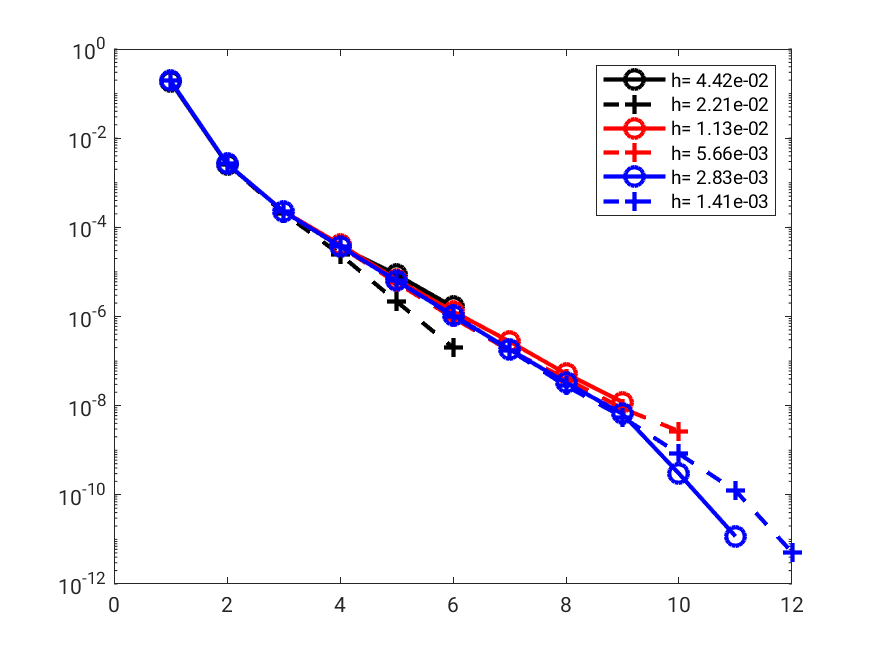}
 \includegraphics[width=0.45\textwidth]{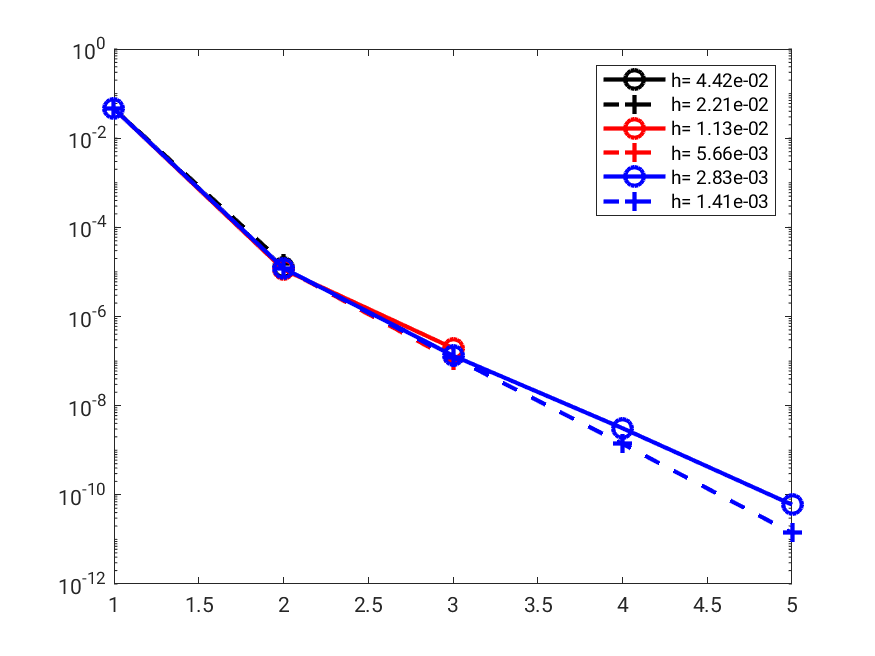}
\end{center}
\caption{Comparison of iteration history of $\|\rho_k\|_{L^1(\Omega)}$ for different choice of $A_0$: $A_0=\emptyset$ (left), $A_0=\Omega$ (right), \cref{sec_num11}}
\label{fig4}
\end{figure}

\subsubsection{An unsolvable problem}
\label{sec_num12}

Let us report about observations when applying our algorithm to an unsolvable problem.
This problem is taken from \cite[Section 4.5]{Wachsmuth2019}. It is very similar to the above problem. The partial differential equation is chosen in such
a way that a constant control $u$ leads to a constant solution $y$.
The problem reads as follows:
Minimize the functional
\[
  \frac12\|y-y_d\|_{L^2(\Omega)}^2 + \frac\alpha2 \|u\|_{L^2(\Omega)}^2 + \beta\|u\|_0,
\]
where $y$ denotes the weak solution of the elliptic partial differential equation
with Neumann boundary conditions
\[
 -\Delta y + y = u \quad \text{ in } \Omega, \quad \frac{\partial y}{\partial n}=0 \quad \text{ on } \partial \Omega.
\]
Let $\alpha>0$ and $\beta>0$ be given, and set
\[
 y_d(x) = -\sqrt{\frac \beta\alpha} - \sqrt{2 \alpha\beta}.
\]
As argued in \cite[Section 4.5]{Wachsmuth2019} this optimal control problem is unsolvable.
This implies that there is also no minimizer $A$ of the value function $J(A)$.

\begin{table}[htb]
\sisetup{table-alignment-mode = format, table-format = 2.2e2, table-number-alignment = left}
\begin{center}
\begin{tabular}{S[table-format = 1.2e1]S[table-format = 2.3]S[table-format=1.5]S[table-format=1.2e2]}
\toprule
$h$ & $J$ & $\|\chi_A\|_{L^1(\Omega)}$ & $\|\rho\|_{L^1(\Omega)}$ \\
\midrule
4.42e-2 & 10.014 & 0.70703 & 6.15e-9 \\
2.21e-2 & 10.014 & 0.70703 & 8.63e-10 \\
1.13e-2 & 10.014 & 0.70701 & 9.29e-10 \\
5.66e-3 & 10.014 & 0.70670 & 2.75e-9 \\
2.83e-3 & 10.014 & 0.70674 & 2.49e-9 \\
1.41e-3 & 10.014 & 0.70693 & 1.37e-9 \\
\bottomrule
\end{tabular}
\caption{Results of optimization, \cref{sec_num12}}\label{table2}
\end{center}
\end{table}

In the computations, we used
\[
 \beta = 0.01, \quad \alpha = 1000, \ \Omega = (0,1)^2.
\]
The results are shown in \cref{table2}.
There, the values of $J$, $\|\chi_A\|_{L^1(\Omega)}$, and $\|\rho\|_{L^1(\Omega)}$ are shown for the final iterate on
meshes with different mesh-sizes $h$.
In contrast to the results of \cite{Wachsmuth2019}, where resulting controls had always support equal to $\Omega$,
the measure of the sets $A_k$ was in the order of $0.7$ after a few steps of \cref{alg_top}.
In our experiments, the line-search took much more steps than in the previous example, and
only small modifications of the $A_k$ were accepted, resulting in very slow convergence.
While $\|\rho_k\|_{L^1(\Omega)}$ was very small after a few steps of the algorithm,
the support of $\rho_k$ never gets as small as for the previous example.
For this example, we stopped the algorithm after 100 iterations.
Still, according to  \cref{thm_convergence}, the algorithm
produces a minimizing sequence for the unsolvable example,
which is not the case for the thresholding method of  \cite{Wachsmuth2019}.

\subsection{Binary control problems}\label{sec_num2}

Following the ideas of \cite{Amstutz2010,Amstutz2011,HahnLeyfferSager2023,MannsHahnKirchesLeyfferSager2023}, we will apply our algorithm to a binary control problem,
where controls  can only take values in $\{0,+1\}$.
We will use a problem considered in \cite{Amstutz2010,Amstutz2011}, which reads:
Minimize
\[
 \min  \frac12 \|y-y_d\|_{L^2(\Omega)}^2 + \nu \|u\|_{L^1(\Omega)}
\]
over all $(y,u) \in H^1_0(\Omega) \times L^2(\Omega)$ satisfying
\[
 -\Delta  y = u \quad \text{ a.e.\@ on }\Omega
\]
and
\[
 u(x) \in \{0,1\} \text{ f.a.a.\@ } x\in \Omega.
\]
Hence, $u$ itself is a characteristic function. And the above problem can be written in our setting as:
Minimize
\[
 J(A,u) := \frac12 \|y-y_d\|_{L^2(\Omega)}^2 + \nu \int_A \dx
\]
over all $(y,u) \in H^1_0(\Omega) \times L^2(\Omega)$ satisfying
\[
 -\Delta  y = \chi_A u \quad \text{ a.e.\@ on }\Omega
\]
and the (trivial)
constraint
\[
 u = 1 \text{ a.e.\@ on }\Omega.
\]
This setting does not directly fit into our framework.
Still we can compute the topological derivative as follows.
The solution of $u\mapsto J(A,u)$ is given by $u_A\equiv1$, which greatly simplifies the computations of \cref{sec3}.
And we have the following result concering the topological derivative of the value function.

\begin{theorem}
The topological derivative $ DJ(B)(x)$
of the value function of the binary control problem
exists
 for almost all $x \in \Omega$, and is given by
 \[
  DJ(B)(x) = \sigma(B,x)(\beta(x) + p_B(x))
 \]
with $\sigma(B,x)$ as in \cref{thm_top_der}.
\end{theorem}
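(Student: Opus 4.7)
The plan is to specialize the proof of \cref{thm_top_der} by exploiting the fact that the inner minimization over $u$ is trivial for the binary problem: the minimizer is $u_A \equiv 1$ independently of $A$, so the value function reduces to $J(A) = \tfrac12\|S\chi_A - y_d\|_Y^2 + \nu |A|$. Because this setting does not quite fit the abstract framework, I would not invoke \cref{thm_top_der} directly but instead reprove the two ingredients it rests on: a first-order expansion of $J$ with a strictly higher-order remainder, together with the Lebesgue differentiation theorem.

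First, I would establish the exact identity. Writing $y_A = S\chi_A$, $y_B = S\chi_B$, and $p_B = S^*(y_B - y_d)$, expanding $\tfrac12\|y_A - y_d\|_Y^2$ around $y_B$ as in \eqref{eq_exp_ymyd} yields
\begin{equation*}
J(A) - J(B) = \int_\Omega (\chi_A - \chi_B)(p_B + \nu)\,\dx + \tfrac12\|y_A - y_B\|_Y^2 .
\end{equation*}
This is the direct analog of \cref{thm_exp_valuef} with $\bar H(p) = p$, reflecting that $g = I_{\{1\}}$ forces the minimum in the Hamiltonian to be attained at $u = 1$.

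Next, I would bound the quadratic remainder. The key observation is that $|\chi_A - \chi_B|$ is $\{0,1\}$-valued, so $\|\chi_A - \chi_B\|_{L^p(\Omega)} = \|\chi_A - \chi_B\|_{L^1(\Omega)}^{1/p}$ for every $p \in [1, \infty)$. Combining this with continuity of $S \in \mathcal{L}(L^{q'}(\Omega), Y)$ for some $q' < 2$ (which holds for the elliptic problem considered here, e.g.\@ by duality from Stampacchia's estimate, or equivalently from \ref{ass2_Sstar}) gives
\begin{equation*}
\tfrac12\|y_A - y_B\|_Y^2 \le C\,\|\chi_A - \chi_B\|_{L^1(\Omega)}^{2/q'} \quad \text{with} \quad 2/q' > 1.
\end{equation*}

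Finally, specialize to $A = B \setminus B_r(x_0)$ for $x_0 \in B$, and to $A = B \cup B_r(x_0)$ for $x_0 \notin B$; in both cases $\|\chi_A - \chi_B\|_{L^1(\Omega)} \le |B_r|$, so the quadratic remainder is $o(|B_r|)$. Dividing the expansion by $|B_r|$ and applying the Lebesgue differentiation theorem to the $L^1$ function $x \mapsto p_B(x) + \nu$, the linear term converges for almost every $x_0$ to $\mp(p_B(x_0) + \nu)$, producing exactly the sign factor $\sigma(B, x_0)$ and the claimed formula. The only substantive technical point is the higher-order remainder bound; this is actually cleaner here than in \cref{thm_top_der}, since no Lipschitz estimate of a Hamiltonian reduction is required.
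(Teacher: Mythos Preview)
Your proof is correct and follows the same overall scheme as the paper---an expansion of $J(A)-J(B)$ with a higher-order remainder, followed by the Lebesgue differentiation theorem---but you take a shorter path in the first step. The paper keeps the structure of \cref{lem_diff_J_final}, expanding around $A$ so that $p_A$ appears in the linear term, and then records the stability bound $\|p_A-p_B\|_{L^q(\Omega)}\le c\,\|\chi_A-\chi_B\|_{L^1(\Omega)}^{1/2}$ (from $S^*S\in\mathcal L(L^2(\Omega),L^q(\Omega))$) as the replacement for \cref{thm_stability}, after which it defers to the proof of \cref{thm_top_der} (and hence implicitly to the two-sided argument of \cref{thm_exp_valuef}). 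You instead expand the quadratic $y\mapsto\tfrac12\|y-y_d\|_Y^2$ directly around $y_B$, which already places $p_B$ in the linear term and leaves the single nonnegative remainder $\tfrac12\|y_A-y_B\|_Y^2$; you then bound this remainder via $S\in\mathcal L(L^{q'}(\Omega),Y)$ for some $q'<2$, which is the dual statement to \ref{ass2_Sstar}. Your route avoids the intermediate $p_A\to p_B$ swap and the two-sided estimate altogether, which is the natural simplification when the value function is exactly quadratic in $\chi_A$; the paper's route has the virtue of reusing the general machinery verbatim.
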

\begin{proof}
The result of \cref{lem_diff_J_final} in this situation has to be modified to
\[ 
J(A,u_A) - J(B,u_B)  +  \frac12\|y_B-y_A\|_Y^2 =\int_\Omega   (\chi_A-\chi_B)(\beta+ p_A )\dx.
\]
where we have used set $g=0$ and $\chi_Au_A-\chi_Bu_B = \chi_A-\chi_B$ in \eqref{eq3101}.
Since $p_A-p_B = S^*S(\chi_A-\chi_B)$, we have the estimate $\|p_A-p_B\|_{L^q(\Omega)} \le c \|\chi_A-\chi_B\|_{L^2(\Omega)} = c \|\chi_A-\chi_B\|_{L^1(\Omega)}^{1/2}$,
which replaces the result of \cref{thm_stability}.
Now the claim can be proven as in the proof of \cref{thm_top_der}.
\end{proof}

The topological derivative coincides with the result \cite[Corollary 3.2]{Amstutz2010}.
The computation of the topological derivative does not involve the solution of any optimization problem: given $A$, only $y_A$ and $p_A$ have to be computed.

Let us report about the results for the following choice of parameters, corresponding to Case 3 in \cite[Section 9]{Amstutz2010}:
\[
y_d = 0.05, \quad \nu = 0.002.
\]
The computed control on the finest discretization can be seen in \cref{fig5}, which agrees with  \cite[Figure 4]{Amstutz2010}.
The results of the optimization runs for different discretizations can be seen in \cref{table3}.
In all cases, the algorithm stopped due to a failed line-search. Nevertheless, the error quantity $\|\rho\|_{L^1(\Omega)}$ is very small,
and is decreasing with decreasing mesh-size. According to \cref{thm_convergence} this indicates that the algorithm produces a minimizing sequence.

\begin{table}[htb]
\sisetup{table-alignment-mode = format, table-format = 2.2e2, table-number-alignment = left}
\begin{center}
\begin{tabular}{S[table-format = 1.2e2]S[table-format = 1.3]S[table-format=1.5]S[table-format=1.2e2]S[table-format=2] }
\toprule
$h$ & $J$ & $\|\chi_A\|_{L^1(\Omega)}$ & $\|\rho\|_{L^1(\Omega)}$ & It\\
\midrule
 6.99e-02 & 1.799e-03 & 1.63770 & 5.60e-08 & 21 \\
 3.49e-02 & 1.872e-03 & 1.63794 & 4.16e-09 & 25 \\
 1.79e-02 & 1.909e-03 & 1.63808 & 3.93e-10 & 30 \\
 8.94e-03 & 1.928e-03 & 1.63806 & 5.36e-11 & 36 \\
 4.47e-03 & 1.938e-03 & 1.63802 & 1.51e-12 & 39 \\
 2.24e-03 & 1.943e-03 & 1.63802 & 3.28e-12 & 35 \\
\bottomrule
\end{tabular}
\caption{Results of optimization, \cref{sec_num2}} \label{table3}
\end{center}
\end{table}

\begin{figure}[htb]
\begin{center}
 \includegraphics[width=0.75\textwidth]{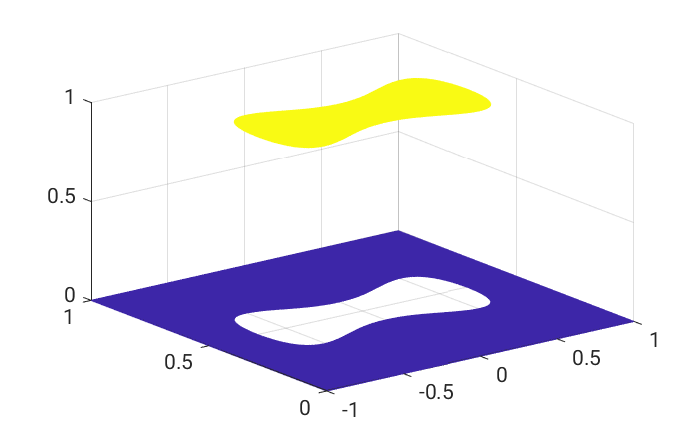}
\end{center}
\caption{Solution for $h=2.24\cdot 10^{-3}$, \cref{sec_num2} }
\label{fig5}
\end{figure}

We also implemented the level-set method from \cite{Amstutz2011, AmstutzA2006}.
All parameters were chosen as in \cite{Amstutz2011}.
We took the same example as above and performed the computations on the grid with $h=2.24\cdot 10^{-3}$.
The resulting iteration history of $ \|\rho_k\|_{L^1(\Omega)}$
for our method and the level-set method can be seen in \cref{fig6}.
In the level-set method, the domain $A$ is determined as $A:=\{x: \ \psi(x) \ge 0\}$,
where $\psi$ is the level-set function.
Here, it makes a difference, whether $\psi$ is discretized by piecewise constant (P$0$) or piecewise linear (P$1$)
finite elements. We implemented both variants.
Our method was implemented using piecewise constant functions for the control.
The computations in all three methods were stopped as soon as $ \|\rho_k\|_{L^1(\Omega)} < 10^{-11}$.
The results can be seen in \cref{fig6}: the black plus-signs correspond to the piecewise constant (P$0$) case, while
the blue x's refer to the piecewise linear (P$1$) case of the level-set method, where all integrals are computed exactly following the ideas of \cite{Hinze2005}.
While the level-set method in the piecewise linear case seems to be the fastest of these three methods,
the comparison to our method with piecewise constant discretization is a bit unfair, as the piecewise linear method
can resolve the interface much finer.
Still our method is faster than the piecewise constant variant of the level-set method.

\begin{figure}[htb]
\begin{center}
 \includegraphics[width=0.75\textwidth]{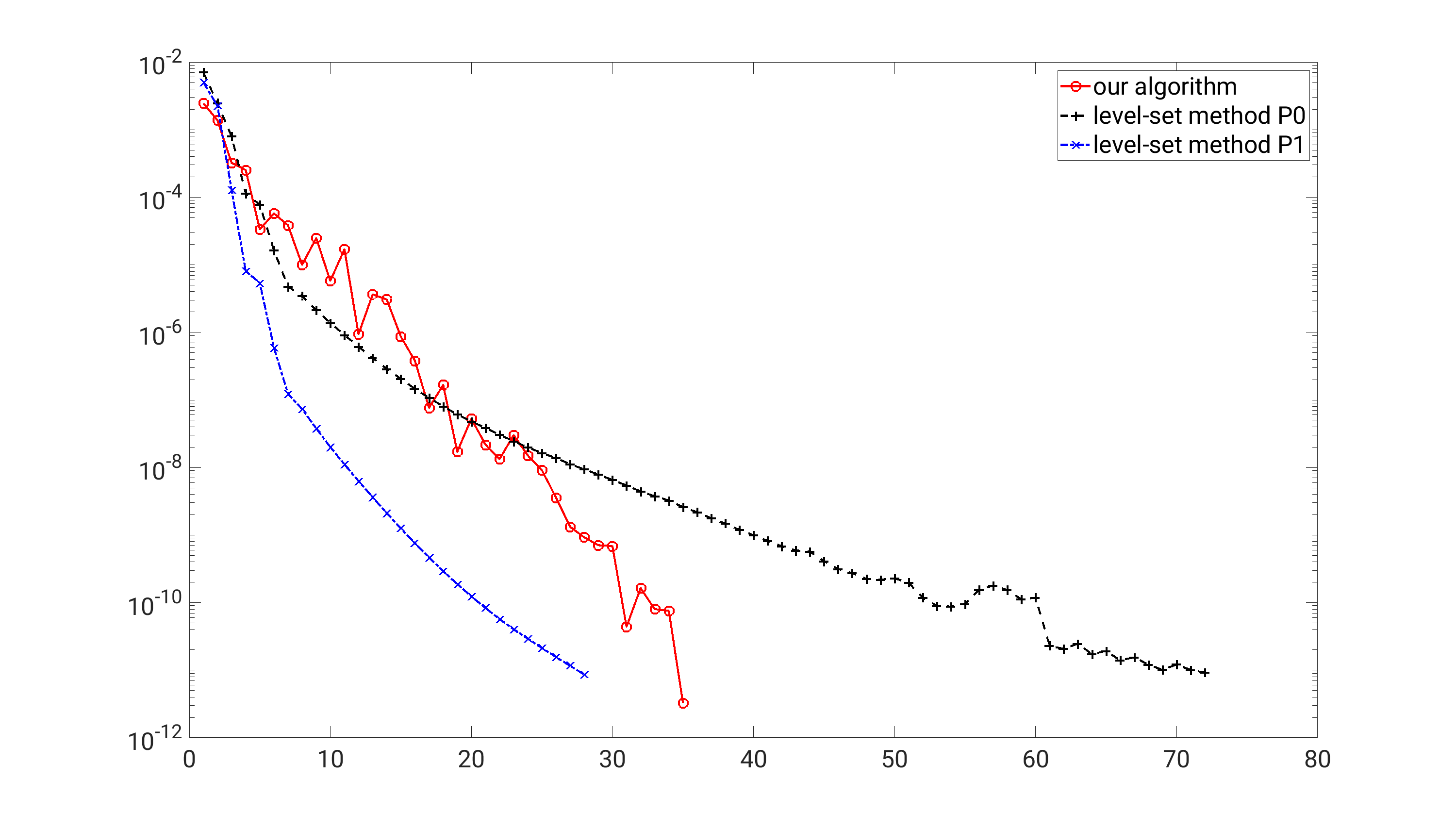}
\end{center}
\caption{Comparison of \cref{alg_top} (red circles) and level-set method \cite{Amstutz2011} (black +'s: P$0$, blue x's: P$1$): Iteration history of $ \|\rho_k\|_{L^1(\Omega)}$, \cref{sec_num2} }
\label{fig6}
\end{figure}

\section{Conclusion}

In this paper, we developed an algorithm to solve control problems with $L^0$-cost.
As showed in \cref{thm_convergence}, the algorithm produces a minimizing sequence,
which is remarkable as the $L^0$-optimization problem is not convex.
The algorithm was motivated by the concept of topological derivatives applied
to a suitable chosen sub-problem.

\section*{Acknowledgements}

The author thanks the anonymous referees, Bastian Dittrich, Fritz Schwob, Gerd Wachsmuth, and Daniel Walther  for remarks on an earlier versions of the manuscript.


\bibliographystyle{jnsao}
\bibliography{topological}

\end{document}